\DeclareMathOperator{\lcm}{lcm}
\DeclareMathOperator{\pnt}{\raise 0.5mm \hbox{\large\textbf{.}}}
\newcommand{\note}[2][ ]{}
\newtheorem{theorem}{Theorem}
\newtheorem{proposition}[theorem]{Proposition}
\newtheorem{conjecture}[theorem]{Conjecture}
\theoremstyle{definition}
\newtheorem{remark}[theorem]{Remark}
\begin{document}
\title[On the density of the odd values of the partition function]{On the density of the odd values\\of the partition function}
\author{Samuel D. Judge, William J. Keith, and Fabrizio Zanello} \address{Department of Mathematical  Sciences\\ Michigan Tech\\ Houghton, MI  49931-1295}
\email{sdjudge@mtu.edu, wjkeith@mtu.edu, zanello@mtu.edu}
\thanks{2010 {\em Mathematics Subject Classification.} Primary: 11P83; Secondary:  05A17, 11P84, 11P82, 11F33.\\\indent 
{\em Key words and phrases.} Partition function; Multipartition function; binary $q$-series; density odd values; partition identities; Ramanujan identities; modular forms modulo 2; regular partitions.}

\maketitle

\begin{abstract} The purpose of this note is to introduce a new approach to the study of one of the most basic and seemingly intractable problems in partition theory, namely the conjecture that the partition function $p(n)$ is equidistributed modulo 2. 

Our main result will relate the densities, say $\delta_t$, of the odd values of the $t$-multipartition functions $p_t(n)$, for several  integers $t$. In particular, we will show that if $\delta_t>0$  for some $t\in \{5,7,11,13,17,19,23,25\}$, then (assuming it exists) $\delta_1>0$; that is, $p(n)$ itself is odd with positive density. Notice that, currently, the best unconditional result does not even imply that $p(n)$ is odd for $\sqrt{x}$ values of $n\le x$. In general, we conjecture that $\delta_t=1/2$ for all $t$ odd, i.e., that similarly to the case of $p(n)$, all multipartition functions are in fact equidistributed modulo 2.

Our arguments will employ a number of algebraic and analytic methods, ranging from an investigation modulo 2 of some classical Ramanujan identities and several other eta product results, to a unified approach that studies the parity of the Fourier coefficients of a broad class of modular form identities recently introduced by Radu. 
\end{abstract}

\section{Introduction and main results}

Let $p(n)$ denote the number of \emph{partitions} of a nonnegative integer $n$, i.e., the number of ways $n$ can be written as $n=\lambda_1+\dots +\lambda_k$, for integers $\lambda_1\ge \dots \ge \lambda_k\ge 1$. It is well known (see e.g. \cite{Andr}) that the generating function for $p(n)$ is $$\sum_{n=0}^\infty p(n) q^n = \frac{1}{\prod_{i=1}^{\infty}(1-q^i)}.$$

Over the years, an enormous amount of research, which has developed or employed a variety of combinatorial, algebraic or analytic techniques, has been devoted to the problem of studying the behavior of the partition function $p(n)$ modulo a prime number $p$ (see for instance, as a highly nonexhaustive list, \cite{AhlOno, Mahl, Ono1, Radu1, Rama, Serre}). While a significant body of literature is now available on the properties of $p(n)$ modulo $p$ for $p\ge 5$ (i.e., when $p$ does not divide 24), much less is  known today on the behavior of $p(n)$ modulo 2 or 3. The present work focuses on the case $p=2$, studied in print at least since the 1967 paper \cite{PaSh}, which itself mentions some previous interest by Atkins and Dyson.

Even though infinitely many values of $p(n)$ must be odd, we are still very far from knowing whether the partition function is odd with \emph{positive  density}; i.e., if we define 
$$\delta_1 = \lim_{x \rightarrow \infty} \frac{\# \{n \leq x : p(n) {\ }\text{is odd} \}}{x},$$
if the limit exists, then proving that $\delta_1 > 0$ seems to be well beyond the current technology.  In fact, the best asymptotic lower bound known today on the number of odd values of $p(n)$ for $n\le x$, due to Bella\"iche, Green, and Soundararajan \cite{BGS} (who have improved results of several other authors; see, just as a sample, \cite{Ahl,BelNic,Ei,Nic,Ono3,Ser}), is given by $\frac{\sqrt{x}}{\log \log x}$, for $x \rightarrow \infty$. (Tangentially, we know from \cite{BelNic} that the number of $n \leq x$ such that $p(n)$ is even is at least of order $\sqrt{x} \log{\log{x}}$; notice also that, unlike for the odd values, an asymptotic lower bound of order $\sqrt{x}$ for the even values is very easy to obtain.)

It is conjectured that $\delta_1$ exists and equals $1/2$ (see e.g. \cite{Calk,PaSh}), and indeed, it is widely believed that $p(n)$ behaves essentially ``randomly'' modulo 2.

For any positive integer $t$, define $p_t(n)$ as the number of \emph{$t$-multipartitions} of $n$; i.e., the integers $p_t(n)$ are the coefficients of the $t$-th power of the generating function of $p(n)$:
$$\sum_{n=0}^\infty p_t(n) q^n = \frac{1}{\prod_{i=1}^{\infty}(1-q^i)^t}.$$

The ordinary partition function is therefore $p(n) = p_1(n)$.  Likewise, denote by $\delta_t$ the density of the odd values of $p_t(n)$:
$$\delta_t = \lim_{x \rightarrow \infty} \frac{\# \{n \leq x : p_t(n) {\ }\text{is odd} \}}{x},$$
if this limit exists. Similarly to the case of $p(n)$, showing that $\delta_t$ exists and is positive also seems an intractable problem, for any value of $t$. Currently, the best asymptotic lower bound   on the number of odd values of $p_t(n)$ with $n\le x$ is given by $\frac{\sqrt{x}}{\log \log x}$ (see \cite{BGS}; see also the third author \cite{Zan} for a slightly weaker elementary bound  when $t=3$). Hence, notice that a bound of $\sqrt{x}$ is not known yet for any $t\ge 1$.  

In general, extending the conjecture that $\delta_1=1/2$, we believe that $\delta_t=1/2$ for all odd positive values of $t$; in other words, that \emph{all $t$-multipartition functions  are equidistributed modulo 2}. Our conjecture is supported by substantial computer evidence. We have:

\begin{conjecture}\label{mainconj} $\delta_t$ exists and equals $\frac{1}{2}$, for any odd positive integer $t$. Equivalently, if $t = 2^k t_0$ with $t_0\ge 1$ odd, then $\delta_t$ exists and equals $2^{-k-1}$.
\end{conjecture}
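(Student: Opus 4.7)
Because the statement is a conjecture, what follows is a strategy rather than a complete argument.

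\textbf{Reduction to odd $t$.} The first move is to show that both formulations of the conjecture collapse to a single assertion. Iterating $(1-q^i)^2 \equiv 1-q^{2i}\pmod 2$, one has, for $t = 2^k t_0$ with $t_0$ odd,
$$\sum_{n\geq 0} p_t(n)\,q^n \;\equiv\; \prod_{i\geq 1}\frac{1}{(1-q^{2^k i})^{t_0}} \;=\; \sum_{m\geq 0} p_{t_0}(m)\, q^{2^k m} \pmod 2,$$
so $p_t(n) \equiv p_{t_0}(n/2^k) \pmod 2$ when $2^k \mid n$ and $p_t(n)$ is even otherwise. Counting odd values up to $x$ therefore matches counting odd values of $p_{t_0}$ up to $x/2^k$, giving $\delta_t = 2^{-k}\delta_{t_0}$ whenever $\delta_{t_0}$ exists. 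In particular, the two clauses of Conjecture~\ref{mainconj} are equivalent, and the entire conjecture reduces to proving $\delta_{t_0} = 1/2$ for every odd $t_0 \geq 1$.

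\textbf{Attacking the odd case.} For odd $t$ I would try to combine three ingredients. First, the paper's own relational theorems, which (as advertised in the abstract) transfer positivity of $\delta_t$ among various odd $t$ via eta product and Ramanujan-type identities; I would attempt to sharpen those implications from positivity to the precise value $1/2$, so that handling a single convenient $t$ suffices. Second, a modular realization: after multiplying $\sum p_t(n)q^n$ by an appropriate eta quotient (as in Radu's framework mentioned in the abstract), express it modulo 2 as the reduction of a genuine modular form, and invoke Serre's theorem that the density of $n$ with a nonzero mod-2 Fourier coefficient equals a rational number determined by the image of the attached Galois representation. Third, an analytic step (Tauberian in flavor, or via second-moment estimates on the parity indicator) to upgrade an average statement to existence of the limit $\delta_{t_0}$ itself.

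\textbf{Main obstacle.} The crucial gap --- and the reason the conjecture remains open --- is the chasm between ``positive density'' and ``density exactly $1/2$'': even the far weaker statement $\delta_1 > 0$ is unreachable today, the best known lower bound on odd values of $p(n)$ up to $x$ being $\sqrt{x}/\log\log x$ by Bella\"iche--Green--Soundararajan. Translated into the modular setting, what is needed is that the mod-2 Galois representations attached to the relevant eta quotients have image large enough to force a full $1/2$ density, and this is well beyond current technology. I therefore expect the strategy above can push Conjecture~\ref{mainconj} down to a clean statement about a single mod-2 Galois representation, but that finishing it requires a genuinely new analytic or arithmetic input.
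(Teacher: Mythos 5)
This statement is Conjecture~\ref{mainconj}: it is open, and the paper offers no proof of it --- only the parenthetical remark that parity considerations reduce it to odd $t$, together with computational evidence. The one piece of your proposal that is an actual argument, the reduction of $t=2^k t_0$ to the odd part $t_0$ via $(1-q^i)^2\equiv 1-q^{2i}\pmod 2$, hence $p_t(n)\equiv p_{t_0}(n/2^k)\pmod 2$ for $2^k\mid n$ and $p_t(n)$ even otherwise, is correct and is exactly the content of the paper's remark; it establishes the equivalence of the two clauses, nothing more.

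The remainder is a strategy rather than a proof, as you say, but its central modular-form ingredient is misdirected. The Serre-type density theorems you invoke concern holomorphic modular forms of \emph{integer} weight, and there they give that the coefficients are even for almost all $n$, i.e.\ the odd coefficients have density \emph{zero} --- not a rational density read off a Galois image that could equal $1/2$. The series $\prod_{i\ge 1}(1-q^i)^{-t}$ are eta quotients of negative (half-integral, for $t$ odd) weight, precisely outside the scope of those results; indeed, if one could realize them mod $2$ as reductions of integer-weight forms, one would be forced to density $0$, contradicting the conjectured value $1/2$. This is the structural reason the problem is hard, beyond the quantitative gap you cite. Likewise, the hope of ``sharpening'' Theorems~\ref{TwoTermRels} and \ref{ThreeTermRels} from positivity transfer to the exact value $1/2$ runs into the fact that those congruences constrain the densities only through their distribution over subprogressions: Proposition~\ref{five} shows, for example, that $\delta_1=1$ would force $\delta_5=4/5$, so the identities do not simply propagate a common value. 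In short, your proposal proves only the equivalence statement (which the paper also notes), and the route sketched for the substantive claim $\delta_{t_0}=1/2$ would not go through as described.
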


(Notice that, because of obvious parity reasons, it suffices to assume that $t$ be odd.) Our main goal is to introduce a new approach to the study of the parity of $p(n)$, by connecting it to the parity of $p_t(n)$, for several  values of $t$. In order to do this, we will employ a number of algebraic and analytic methods described below. The next theorem is  the main result of this paper. It will follow as a corollary of certain partition congruences of independent interest, which we will state in the subsequent theorems.

\begin{theorem}\label{DenseThm} Assume all densities $\delta_i$ below exist. Then $\delta_t>0$ implies $\delta_1>0$ for
$$t=5,7,11,13,17,19,23,25.$$
Moreover, $\delta_t > 0$ implies $\delta_r > 0$ for the following pairs $(t,r)$:
$$(27,9), (9,3), (25,5), (15,3), (21,3), (27,3).$$
\end{theorem}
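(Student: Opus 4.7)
The plan is to derive the density implications from explicit partition congruences modulo 2 that connect $p_t(n)$ to $p_r(n)$ for each relevant pair $(t,r)$ in the statement (with $r = 1$ for the first assertion). Such congruences will take the form of identities between mod-2 reductions of generating functions, often restricted to arithmetic progressions, of the general shape
\[p_t(An+B) \equiv p_r(Cn+D) \pmod 2 \quad \text{for all } n \geq 0,\]
or, more globally, identities between the full generating functions modulo 2 after a suitable substitution. These congruences are the content of the subsequent theorems announced in the introduction; once they are in hand, Theorem~\ref{DenseThm} follows from a short combinatorial density-transfer argument.

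The density transfer proceeds as follows. The hypothesis $\delta_t > 0$ places the odd values of $p_t$ on a set of positive density. Averaging over the $A$ residue classes modulo $A$, at least one residue $B$ must carry positive density of odd values of $p_t$ on its arithmetic progression (a step that is formally cleanest when the congruences are full generating-function identities mod 2, in which case no progression-density existence is needed). Applying a congruence $p_t(An+B) \equiv p_r(Cn+D) \pmod 2$ valid identically in $n$, the odd values of $p_r$ on the progression $m \equiv D \pmod C$ inherit positive density, and under the standing assumption that $\delta_r$ exists, we conclude $\delta_r > 0$.

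Producing these congruences is where the real technical work lies. For $t = 5$ and $t = 7$, we expect to reduce Ramanujan's classical identities for $p(5n+4)$ and $p(7n+5)$ modulo 2, exploiting the elementary collapse $(1-q^n)^{2^k} \equiv 1 - q^{2^k n} \pmod 2$. For the pairs involving $r = 3$ or $r = 9$ (namely $(27,9), (9,3), (15,3), (21,3), (27,3)$) and for $(25, 5)$, the natural tool is Ramanujan's cubic identity $\prod_n(1-q^n)^3 \equiv \sum_{n \geq 0} q^{n(n+1)/2} \pmod 2$, combined with standard eta-product manipulations and careful bookkeeping on which power of $\eta$ survives modulo 2. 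For the larger primes $t \in \{11, 13, 17, 19, 23\}$, where no direct Ramanujan-type identity is readily available, we plan to invoke Radu's framework for modular form identities, translating the identities it supplies into eta-product congruences modulo 2 that relate $p_t$ back to $p_1$.

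The principal obstacle is the case-by-case verification: for each of the eight values of $t$ and each of the six pairs, the appropriate modular identity must be produced and then checked to reduce modulo 2 to a relation between $p_t$ and the \emph{correct} $p_r$. For the larger primes this relies critically on Radu's machinery, and ensuring that the resulting identity cleanly separates into the desired form (rather than involving an intermediate $p_s$) will require judicious choice of modular forms and levels. Once these ingredients are assembled, Theorem~\ref{DenseThm} follows by collecting the individual density implications from the partition congruences established in the remainder of the paper.
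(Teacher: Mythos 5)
Your high-level plan coincides with the paper's: Theorem \ref{DenseThm} is indeed deduced from explicit mod-2 congruences (Theorems \ref{TwoTermRels} and \ref{ThreeTermRels}), proved via Ramanujan/Zuckerman-type identities for the small cases and Radu's machinery for $t=11,17,19,23$ and $(7,1,3)$, followed by a short density-transfer argument. However, the congruences you posit, and the transfer argument you build on them, are not the ones that can actually be established, and this creates a genuine gap. The identities available are not clean two-term relations $p_t(An+B)\equiv p_r(Cn+D)\pmod 2$; they necessarily carry an extra dilated term. For example, the key case is
$$q\sum_{n\ge 0}p(5n+4)q^n\equiv \frac{1}{\prod_{i\ge1}(1-q^i)^{5}}+\frac{1}{\prod_{i\ge1}(1-q^{5i})}\pmod 2,$$
i.e.\ the \emph{smaller}-index function restricted to a progression equals the \emph{full} generating function of the larger-index function plus a dilated copy of the smaller one. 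This orientation is essential: because the entire $p_5$ series appears as a single term, the hypothesis $\delta_5>0$ applies to it directly, with no need to locate a good residue class.

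Your averaging step is exactly where the argument as written breaks down. From $\delta_t>0$ you can conclude that \emph{some} residue class modulo $A$ carries positive upper density of odd values of $p_t$, but a congruence of your proposed shape would only be available for one specific class $B$, and nothing prevents the odd values from concentrating elsewhere (your parenthetical caveat about full generating-function identities concedes the issue but does not resolve it, since a full identity is not of the progression form you need). The paper's deduction sidesteps this entirely by a contradiction argument that also disposes of the extra term: assuming $\delta_5>0$ and $\delta_1=0$, the dilated term $\sum p(n)q^{5n}$ contributes only $o(x)$ odd coefficients up to $x$, so the left-hand side inherits $\delta_5x+o(x)$ odd coefficients, forcing $\delta_1\ge\delta_5/5>0$, a contradiction; the other cases are identical in structure, with $(25,1)$ and $(27,3)$ obtained by chaining $(25,5)$ with $(5,1)$ and $(27,9)$ with $(9,3)$. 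To repair your proposal you should replace the hypothesized two-term progression congruences by the three-term (and four-term) relations of Theorems \ref{TwoTermRels} and \ref{ThreeTermRels}, and replace the averaging step by this contradiction/density-zero handling of the dilated term.
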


(Notice that the case $(27,3)$ follows by combining $(27,9)$ and $(9,3)$, and $(25,1)$ follows from $(25,5)$ and $(5,1)$.) In particular, Theorem \ref{DenseThm} says that if the coefficients of the  $t$-th power of the generating function of $p(n)$ are odd with positive density for some $t\in \{5,7,11,13,17,19,23,25\}$, then $p(n)$ itself is odd with positive density (assuming this density exists) --- a fact that, as we saw earlier, seems virtually impossible to show unconditionally today.

In general, the relationship between the density of the odd coefficients of a series in $\mathbb{F}_2[[q]]$ and the density of the odd coefficients of one of its (odd) powers is not obvious. A simple example is given by $F(q)=\prod_{i=0}^\infty (1+q^{4^i})$, whose nonzero coefficients are supported on the numbers with binary digits all in even places (and therefore have density zero), while it is easy to see that, modulo 2, $F(q)^3\equiv \sum_{n=0}^\infty q^n$ (of course giving density 1).

We will be able to prove many of the results of Theorem \ref{DenseThm} by algebraic means, by studying a number of congruences modulo 2, including a simple application of the so-called Ramanujan's ``most beautiful identity,'' i.e., a generating function identity for $p(5n+4)$. This will be done in the next section. We will also show a few other related results of independent interest in that section. For instance, applying the same methods, we will  provide a short proof, and in a sense an explanation, of  the ``striking result'' of a recent paper of Hirschhorn and Sellers \cite{HS}, which in turn improved the seven-author paper \cite{Calkin} --- namely that the odd coefficients of 5-regular partitions have density at most $1/4$ (if this density exists). (In fact, we will relate the parity of 5-regular partitions with that of 20-regular partitions.) Finally, in the last section of the paper, we will show all of the congruences involved in our main theorem, including those that we have not been able to prove algebraically, by a unified approach to the parity of the Fourier coefficients of a broad class of modular forms recently introduced by Radu \cite{Radu}.  In addition to the value often derived from possessing multiple independent proofs of a result by substantially different means, there is a general sense in this field that such algebraic proofs are more satisfying and often give insight that is missing from the usually computer-powered verifications that establish modular form congruences.

Theorem \ref{DenseThm} is a consequence of the next two results, which we will spend the bulk of our paper proving. Recall that two series $f(q) = \sum_{n=n_1}^\infty a(n)q^n$ and $g(q) = \sum_{n=n_2}^\infty b(n) q^n$ are said to satisfy $f(q) \equiv g(q) \pmod{m}$, if $a(n) \equiv b(n) \pmod{m}$ for all integers $n$. All congruences in this work will be modulo 2, unless otherwise stated.

\begin{theorem}\label{TwoTermRels}
The congruence $$q\sum_{n=0}^{\infty}p_t(a n +b)q^n\equiv \frac{1}{\prod_{i=1}^{\infty}(1-q^i)^{at}}+\frac{1}{\prod_{i=1}^{\infty}(1-q^{ai})^t}$$
holds for the following twelve triples $(a,b,t)$:
$$(5,4,1), (7,5,1), (11,6,1), (13,6,1), (17,5,1), (19,4,1),$$$$ (23,1,1), (3,2,3), (5,2,3), (7,1,3), (5,0,5), (3,0,9).$$
\end{theorem}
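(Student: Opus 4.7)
The plan is to establish the twelve congruences individually but in a structured way that exploits their common features. Every triple $(a,b,t)$ in the list satisfies $24b \equiv t \pmod{a}$, the standard numerical condition under which $\sum_{n} p_t(an+b) q^n$ admits a Ramanujan-style eta-quotient dissection. Writing $f(q) = \prod_{i=1}^{\infty} (1-q^i)$, both sides of the desired congruence are already eta quotients modulo $2$, so the natural tool is the Frobenius-type relation $f(q)^{2^k} \equiv f(q^{2^k}) \pmod{2}$ combined with classical dissection identities.

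For the simpler triples, I would argue purely algebraically. The case $(5,4,1)$ is to be launched from Ramanujan's ``most beautiful identity'' $\sum_{n} p(5n+4) q^n = 5\, f(q^5)^5/f(q)^6$: modulo $2$ the factor $5$ disappears, and after multiplying by $q$, clearing denominators using $f(q)^6 \equiv f(q^2) f(q^4) \pmod 2$ (and the analogous reduction of $f(q^5)^5$), the claim becomes an elementary polynomial identity among $f(q), f(q^2), f(q^4), f(q^5), f(q^{10}), f(q^{20})$, verifiable via Jacobi's triple product or Euler's pentagonal number theorem. Analogous known dissections of $p(7n+5)$, of $p_3(an+b)$ for $(a,b)=(3,2), (5,2), (7,1)$, of $p_5(5n)$, and of $p_9(3n)$ should handle $(7,5,1), (3,2,3), (5,2,3), (7,1,3), (5,0,5), (3,0,9)$ by the same template, with only the bookkeeping of exponents and the parity reduction of the rational multipliers differing from case to case.

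For the large-prime triples $(11,6,1), (13,6,1), (17,5,1), (19,4,1), (23,1,1)$, clean classical dissections are not readily available, so I would invoke Radu's framework \cite{Radu}. The plan is to rewrite the congruence, after clearing denominators, as an equality modulo $2$ of two weakly holomorphic modular forms on $\Gamma_0(N)$ for a suitable $N$ dividing $24a$, of equal weight and with controlled orders at every cusp; a Sturm-type bound then reduces the identity to a finite comparison of initial Fourier coefficients modulo $2$, which is mechanizable by computer algebra. The main obstacle will be precisely this modular bookkeeping in the large-prime cases: verifying the hypotheses of Radu's theorem (modularity, holomorphy at all cusps, correct weight and character), and computing explicit Sturm bounds small enough to make the finite verification tractable yet large enough to yield a rigorous conclusion. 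The algebraic cases, while elementary in spirit, also require care in tracking the parities of the integer multipliers that appear in the Ramanujan-type dissections.
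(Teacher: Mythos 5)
Your overall architecture is the same as the paper's: classical Ramanujan-type dissections reduced modulo $2$ for the small cases, and Radu's Ramanujan--Kolberg machinery plus a Sturm bound with a finite computer check for the cases without convenient classical identities. However, as written the algebraic half over-claims in several specific places. First, the reduction of $(5,4,1)$ does not become a routine pentagonal-number/triple-product check after clearing denominators: what one needs is precisely the congruence $(q)_\infty(q^5)_\infty \equiv (q)_\infty^6 + q(q^5)_\infty^6 \pmod 2$, which is a genuine theta-product identity (Blecksmith--Brillhart--Gerst), and the other "simple" cases likewise hinge on specific published identities (Lin for $(7,5,1)$, Zuckerman and Calkin et al.\ for $(13,6,1)$, Chan and Hirschhorn--Sellers for $(3,2,3)$, Chan--Lewis for $(5,2,3)$), not on bookkeeping of Frobenius reductions alone. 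Second, your assignment of cases is off in a way that matters: there is no "known dissection" of $p_3(7n+1)$ of the kind you invoke --- the paper could not produce an algebraic proof of $(7,1,3)$ at all and proves it only via Radu/Sturm --- and there are no direct dissections of $p_5(5n)$ or $p_9(3n)$ either; the paper obtains $(5,0,5)$ and $(3,0,9)$ only by first proving the auxiliary three-term congruences for $p(25n+24)$ and $p_3(9n+8)$ (Theorem \ref{ThreeTermRels}) and then running a $5$- (resp.\ $3$-) dissection extraction argument on the already-proved two-term congruence. That bootstrap is an extra idea your plan does not contain.

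These gaps are repairable, because the modular prong you describe for the five large primes in fact works uniformly: the paper notes that all twelve triples can be proved by the Radu/Sturm method, so you could simply route $(7,1,3)$, $(5,0,5)$, $(3,0,9)$ (and, if you prefer, $(13,6,1)$) through it as well. But to make that prong rigorous you must add two pieces of bookkeeping that your sketch leaves implicit: the forms produced are only weakly holomorphic, so before Sturm's theorem applies you must bound the orders at \emph{all} cusps (via Ligozat's formula and Radu's lower bound, Theorem 47 of \cite{Radu}) and multiply through by a suitable power of a form such as $\eta(4z)^{24}$ to clear the poles; and the two sides need not have the same character (this happens for $(7,1,3)$), in which case the larger Sturm bound for forms of different characters must be used.
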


\begin{theorem}\label{ThreeTermRels}
The congruence
$$q^2\sum_{n=0}^\infty p_t (a^2n+b)q^n\equiv \frac{1}{\prod_{i=1}^{\infty}(1-q^i)^{a^2t}}+\frac{1}{\prod_{i=1}^{\infty}(1-q^{ai})^{at}}+\frac{q}{\prod_{i=1}^{\infty}(1-q^i)^{t}}$$
holds for the following two triples $(a,b,t)$:
$$(3,8,3),(5,24,1).$$
\end{theorem}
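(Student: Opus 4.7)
The plan is to deduce Theorem \ref{ThreeTermRels} by iterating the two-term congruences of Theorem \ref{TwoTermRels}. The key observation is that the residues factor as $25n+24 = 5(5n+4)+4$ and $9n+8 = 3(3n+2)+2$, so computing $p_t(a^2 n+b)$ modulo $2$ reduces to extracting, from the right-hand side of a first application of Theorem \ref{TwoTermRels}, the coefficients in one specific residue class mod $a$.

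In detail, the argument for $(a,b,t)=(5,24,1)$ runs as follows; the case $(3,8,3)$ proceeds identically, with $(3,2,3)$ and $(3,0,9)$ playing the roles of $(5,4,1)$ and $(5,0,5)$. Set $H(q):=\sum_{m=0}^{\infty}p(5m+4)q^m$ and $F(q):=\sum_{m=0}^{\infty}p_5(5m)q^m$. The $(5,4,1)$ case of Theorem \ref{TwoTermRels} gives
\[
qH(q)\equiv\frac{1}{\prod_{i=1}^{\infty}(1-q^i)^{5}}+\frac{1}{\prod_{i=1}^{\infty}(1-q^{5i})}\pmod{2}.
\]
Since $p(25n+24)$ is the coefficient of $q^{5n+4}$ in $H(q)$, equivalently the coefficient of $q^{5(n+1)}$ in $qH(q)$, I extract this coefficient from each side. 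The first summand on the right contributes $p_5(5(n+1))$, and the second contributes $p(n+1)$. Packaging these back into generating functions produces a $1/q$ shift; using that $p_5(0)=p(0)=1$ and $-1-1\equiv 0\pmod{2}$, one obtains
\[
\sum_{n=0}^{\infty}p(25n+24)\,q^n\equiv\frac{1}{q}\left(F(q)+\frac{1}{\prod_{i=1}^{\infty}(1-q^i)}\right)\pmod{2}.
\]
Multiplying by $q^2$ and then invoking the $(5,0,5)$ case of Theorem \ref{TwoTermRels}, namely
\[
qF(q)\equiv\frac{1}{\prod_{i=1}^{\infty}(1-q^i)^{25}}+\frac{1}{\prod_{i=1}^{\infty}(1-q^{5i})^{5}}\pmod{2},
\]
produces exactly the desired three-term congruence.

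There is essentially no analytic obstacle here, since Theorem \ref{TwoTermRels} has already supplied the substantive input, including the Radu-type modular form identities to be developed in the last section of the paper. The only genuine items to verify are the bookkeeping steps: that the $1/q$ shifts arising from reindexing positions $a(n+1)$ as $an$ really give power series (the relevant constant terms vanish modulo $2$), and that the subtracted constants cancel in pairs. In short, Theorem \ref{ThreeTermRels} is a repackaging of two carefully matched applications of Theorem \ref{TwoTermRels}.
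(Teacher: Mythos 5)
Your dissection arithmetic is correct: extracting the $q^{5k}$ (resp.\ $q^{3k}$) terms from the $(5,4,1)$ (resp.\ $(3,2,3)$) case of Theorem \ref{TwoTermRels} and reindexing does give
$q\sum_{n\ge 0}p(25n+24)q^n\equiv \sum_{n\ge 0}p_5(5n)q^n+\frac{1}{(q)_\infty}$ (this is exactly the paper's congruence (\ref{2524})), and combining this with the $(5,0,5)$ case would indeed yield $(5,24,1)$, with the analogous chain giving $(3,8,3)$ from $(3,0,9)$. The problem is that, as a proof of Theorem \ref{ThreeTermRels} inside this paper, the argument is circular: the only algebraic proofs the paper gives of the $(5,0,5)$ and $(3,0,9)$ cases of Theorem \ref{TwoTermRels} are themselves deduced from the $(5,24,1)$ and $(3,8,3)$ cases of Theorem \ref{ThreeTermRels} --- the paper states explicitly that ``Case $(5,0,5)$ of Theorem \ref{TwoTermRels} requires the $(5,24,1)$ case of Theorem \ref{ThreeTermRels}, so we prove the latter first,'' and its derivation of $(5,0,5)$ from $(5,24,1)$ is precisely your extraction step read in the opposite direction (likewise $(3,0,9)$ is obtained from $(3,8,3)$ and $(3,2,3)$). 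So what you have actually established is the mod~$2$ equivalence of the pairs $\{(5,0,5),(5,24,1)\}$ and $\{(3,0,9),(3,8,3)\}$ given the $(5,4,1)$ and $(3,2,3)$ cases; you have not supplied the independent input that breaks the loop.

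To complete your route you would have to anchor $(5,0,5)$ and $(3,0,9)$ on the modular-form (Radu/Sturm) verifications indicated in Section 3 rather than on the algebraic proofs of Section 2; this is legitimate, since the paper's table lists $s$-vectors for those cases, but then one could just as well verify the three-term congruences directly by the same machinery (they also appear in the table), so the detour through Theorem \ref{TwoTermRels} buys nothing. The paper's algebraic proof avoids the issue entirely by proving the three-term congruences first from independent classical identities: $(5,24,1)$ follows from Zuckerman's identity (\ref{zzuu}) for $p(25n+24)$ together with repeated use of (\ref{5i}), and $(3,8,3)$ follows from 3-dissecting Chan's congruence (\ref{xi}) and applying (\ref{HSEqn}). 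Your proposal omits any such independent ingredient, and the sentence asserting that ``Theorem \ref{TwoTermRels} has already supplied the substantive input'' is exactly where the gap lies.
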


The proof of Theorem \ref{DenseThm} is easily deduced once Theorems \ref{TwoTermRels} and \ref{ThreeTermRels} are established, as we explain now for $\delta_5 > 0$ implying $ \delta_1 > 0$; the logic in the other cases is similar.

\begin{proof}
Suppose we know Theorem \ref{TwoTermRels} for case $(5,4,1)$; i.e., 
\begin{equation}\label{FiveFourOne}q \sum_{n=0}^\infty p(5n+4) q^n \equiv  \frac{1}{\prod_{i=1}^{\infty}(1-q^i)^{5}}+\frac{1}{\prod_{i=1}^{\infty}(1-q^{5i})}.
\end{equation}

Assuming that $\delta_5 > 0$ and that $\delta_1$ exists, suppose that $\delta_1=0$.  Thus $\#\{n \leq x: p_5(n) {\ }\text{is odd} \} = \delta_5 x + o(x)$, while the number of odd coefficients up to $x$ of $1/\prod_{i=1}^{\infty}(1-q^{5i})=\sum_{n=0}^\infty p(n) q^{5n}$, namely $\#\{n \leq x/5: p(n){\ }\text{is odd}\}$, is $o(x)$. It easily follows that the odd coefficients up to $x$ of $\sum_{n=0}^\infty p(5n+4) q^n$ are also $\delta_5 x + o(x)$. Hence $\delta_1\ge \delta_5/5>0$, a contradiction.
\end{proof}

Notice that a number of additional results can also follow from these congruences.  Some are likely counterfactual but might serve as hypothesis testing or sources of absurdities in future investigations.  For instance, assuming $\delta_1 = 1$, one would obtain the following.

\begin{proposition}\label{five} If $\delta_1 = 1$, then $\delta_5 = 4/5$, with density zero for the odd coefficients of the series $\sum_{n=0}^\infty p_5(5n) q^{5n}$ and density 1 among all other coefficients. 
\end{proposition}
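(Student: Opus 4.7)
The plan is to read off the congruence \eqref{FiveFourOne} coefficient by coefficient, separating the two residue classes modulo $5$, and then to invoke the hypothesis $\delta_1 = 1$ on each class.

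First I would reindex the left-hand side of \eqref{FiveFourOne} as $\sum_{m\ge 1} p(5m-1)\,q^m$ and expand the right-hand side as $\sum_{n\ge 0} p_5(n)\,q^n + \sum_{k\ge 0} p(k)\,q^{5k}$. Matching the coefficient of $q^m$ on both sides then yields the congruences
\[ p_5(m) \equiv p(5m-1) \pmod{2} \quad \text{when } 5 \nmid m, \]
\[ p_5(5k) \equiv p(25k-1) + p(k) \pmod{2} \quad \text{when } k \ge 1. \]

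Next I would apply $\delta_1 = 1$, which forces $p(n)$ to be odd off a subset of $\mathbb{N}$ of density zero. Since the affine substitutions $n = 5m-1$, $n = 25k-1$, and $n = k$ each send the complement of a density-zero set to the complement of a density-zero set in the new variable, on the class $\{m : 5 \nmid m\}$ (of density $4/5$) we get that $p_5(m)$ is odd with density one, while on the class $\{m = 5k\}$ (of density $1/5$) we get $p(25k-1) + p(k) \equiv 1 + 1 \equiv 0 \pmod{2}$ with density one, so $p_5(5k)$ is \emph{even} with density one.

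Summing the two contributions produces $\delta_5 = 4/5 + 0 = 4/5$, and simultaneously the two finer statements in the proposition: the odd coefficients of the sub-series $\sum_{n\ge 0} p_5(5n)\,q^{5n}$ have density zero in the full series, whereas the coefficients indexed by $m$ with $5 \nmid m$ are odd with density one. No step here poses a genuine obstacle; the only small point to verify is that the affine reindexings preserve density-zero exceptional sets, which is immediate from their being linear with bounded denominators.
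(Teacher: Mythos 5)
Your proposal is correct and follows essentially the same route as the paper: both arguments read the congruence \eqref{FiveFourOne} separately on the residue class of multiples of $5$ (where the term $1/(q^5)_\infty$ contributes and forces $p_5(5k)\equiv p(25k-1)+p(k)$, hence even off a density-zero set) and on the complementary classes (where $p_5(m)\equiv p(5m-1)$, hence odd off a density-zero set). Your version merely makes the coefficient-wise congruences and the density bookkeeping explicit, which is fine; no gap.
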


\begin{proof} If $\delta_1 = 1$, then in  (\ref{FiveFourOne}),  the left-hand side will be odd with density 1, since $5n+4$ is a subprogression of $n$.  On the right-hand side, the coefficients of $q^{5n}$ in the second term will be odd with density 1, and therefore the same coefficients in the first term must be odd with density zero. Outside of this progression, however, the coefficients of the first term must be odd with density 1, immediately giving the desired conclusion.
\end{proof}

Notice that, as seems highly likely to be the case, it follows  that \emph{if the odd density of $p_5(n)$ does not satisfy the conclusions of Proposition \ref{five}, then, should it exist, $\delta_1 < 1$, and thus the {even} coefficients of $p(n)$ have positive density} --- a fact that, as we mentioned earlier, is widely believed to be true but appears to be well beyond the current state of the art.

The converse of Theorem \ref{DenseThm}, i.e., for instance  showing that $\delta_1 > 0$ implies $ \delta_5 > 0$,  is not apparent and seems to require more knowledge of the structure of any odd density of $p(n)$. One can obtain, just as an example, the following weaker result:

\begin{proposition} Assuming that $ \delta_1$ exists, we have that $\delta_5 = 0$ implies $ \delta_1 \leq \frac{5}{6}$. Moreover, in that case, if $\delta_1>0$ then almost all of the odd values of $p(5n+4)$ come from the $25k+24$ subprogression.
\end{proposition}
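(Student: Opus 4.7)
The plan is to unpack the key congruence (\ref{FiveFourOne}) coming from Theorem \ref{TwoTermRels} with $(a,b,t)=(5,4,1)$ coefficient-by-coefficient. Reading off the coefficient of $q^{m+1}$ on both sides gives, for every $m \ge 0$,
\[
p(5m+4) \;\equiv\; p_5(m+1) \;+\; [\,5 \mid m+1\,]\,p\!\left(\tfrac{m+1}{5}\right) \pmod 2.
\]
The hypothesis $\delta_5 = 0$ says that $E = \{n : p_5(n) \text{ odd}\}$ has density zero, so for $m$ with $m+1 \notin E$ the $p_5$ summand vanishes mod $2$, leaving that $p(5m+4)$ is odd if and only if $5 \mid m+1$ and $p((m+1)/5)$ is odd.

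Next I would do a residue-class count. Let $\delta_{1,i}$ denote the density among $n \le x$ of those with $n \equiv i \pmod 5$ and $p(n)$ odd (existence is not a priori assumed); trivially $\delta_{1,i} \le 1/5$ for all $i$. Specializing the previous step to $m = 5k+4$ yields $p(25k+24) \equiv p(k+1) \pmod 2$ outside a density-zero exceptional set in $k$, and a routine count then shows that $\delta_{1,4}$ exists and equals $\delta_1/25$. Since $\#\{n \le x : p(n) \text{ odd}\}$ splits as the sum over residues mod $5$, bounding the four other residue classes crudely by $1/5$ each gives $\delta_1 \le 4\cdot(1/5) + \delta_1/25 = 4/5 + \delta_1/25$, and solving produces $\delta_1 \le 5/6$.

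For the second assertion, I would compare, for $n \le N$, the total count $B(N)$ of $n$ with $p(5n+4)$ odd against the subcount $C(N)$ of such $n$ with $n \equiv 4 \pmod 5$ (equivalently, $5n+4 \equiv 24 \pmod{25}$). The first paragraph already shows that outside a density-zero set, every $n$ with $p(5n+4)$ odd satisfies $5 \mid n+1$, so $B(N) - C(N) = o(N)$. Combined with the asymptotic $C(N) \sim \delta_1 N/5$ obtained from the almost-everywhere congruence $p(25k+24) \equiv p(k+1) \pmod 2$, this yields $C(N)/B(N) \to 1$ as soon as $\delta_1 > 0$, which is the desired conclusion.

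There is no serious obstacle here: once one records the two-term form of (\ref{FiveFourOne}) and notices that the $p_5$ summand contributes only on a density-zero set, both parts reduce to straightforward bookkeeping. The only subtlety is to avoid presuming existence of the residue-class densities $\delta_{1,i}$ for $i \ne 4$; only the trivial bound $\delta_{1,i} \le 1/5$ is needed for those, while the existence of $\delta_{1,4}$ itself is a by-product of the computation.
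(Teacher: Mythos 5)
Your proposal is correct and follows essentially the same route as the paper: read off the coefficients of (\ref{FiveFourOne}), use $\delta_5=0$ to discard the $p_5$ term off a density-zero set, and exploit that the remaining term is supported on exponents divisible by $5$, which forces both the bound $\delta_1\le 5/6$ and the concentration of odd values of $p(5n+4)$ on the $25k+24$ subprogression. The only (minor) difference is bookkeeping: the paper introduces the density $y$ of odd values outside the $5n+4$ progression and gets the exact relation $\delta_1=5y/6$, while you avoid assuming any auxiliary densities exist by using the trivial bound $1/5$ on the other residue classes, which is a slightly more careful but equivalent way to reach the same inequality.
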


\begin{proof}
Let $x$ denote the odd density of $p(5n+4)$, and $y$ the odd density of the partition function outside the $5n+4$ progression, if these are well defined.  Assume $\delta_5 = 0$. Thus, by (\ref{FiveFourOne}), it easily follows that $x$ is well defined and has value  $\delta_1/5$, and therefore $y$ is also well defined.  Then clearly, $\delta_1 = 4y/5 + x/5$, which gives us that $\delta_1 =5y/6\le 5/6$. 

Moreover, we have from (\ref{FiveFourOne}) that if $\delta_1 > 0$, $p(25k+24)$ must contain almost all of the odd values of $p(5n+4)$. Indeed, the second term on the right-hand side contributes odd values to a positive proportion of the values of $p(25k+24)$ and even values to all other subprogressions of $5n+4$, while by the assumption that $\delta_5 = 0$, the first term is odd with density zero. This completes the proof.
\end{proof}

\section{Algebraic proofs}

In this section, we  show algebraically the cases $(5,4,1)$, $(7,5,1)$, $(13,6,1)$, $(3,2,3)$, $(5,2,3)$, $(5,0,5)$, and $(3,0,9)$ of Theorem \ref{TwoTermRels}, and both cases $(3,8,3)$ and $(5,24,1)$ of Theorem \ref{ThreeTermRels}. We have not been able to determine algebraic proofs for the cases $(11,6,1)$,  $(17,5,1)$, $(19,4,1)$, $(23,1,1)$, and $(7,1,3)$ of Theorem \ref{TwoTermRels}, so these will only be shown analytically in the next section.

We  employ  the standard $q$-series notation $$\prod_{i=0}^\infty (1-aq^i) = (a;q)_\infty \, ; \quad (q;q)_\infty = (q)_\infty.$$

First we prove Theorem \ref{TwoTermRels} for $(5,4,1)$, which is perhaps the easiest case. 

\begin{proof} 
Ramanujan's ``most beautiful identity'' states that 
\begin{equation}\label{bea}
\sum_{n=0}^\infty p(5n+4)q^n = 5 \frac{(q^5)_\infty^5}{(q)_\infty^6}.
\end{equation}

Now, using a congruence of Blecksmith-Brillhart-Gerst (see \cite{BBG}, p. 301; cf. also Hirschhorn's equation (13) in \cite{Hirsch1}), we can easily obtain
\begin{equation}\label{H1} \frac{(q)_\infty}{\prod_{i=1}^\infty (1-q^{10i-5})} \equiv (q)_\infty (q^5)_\infty \equiv \sum_{n=1}^\infty q^{n^2-n}+\sum_{n=1}^\infty q^{5n^2-5n+1}.\end{equation}

It is well known that
$$(q)_\infty^3 = \sum_{n=0}^\infty (-1)^n (2n+1) q^{n(n+1)/2} \equiv \sum_{n=0}^\infty q^{n(n+1)/2},$$
and therefore we may transform equation (\ref{H1}) into
\begin{equation}\label{15} (q)_\infty (q^5)_\infty \equiv (q)_\infty^6 + q (q^{5})_\infty^6.
\end{equation}

It now follows by (\ref{bea}) and standard algebraic manipulations that
\begin{equation}
q \sum_{n=0}^\infty p(5n+4)q^n \equiv q \frac{(q^5)_\infty^5}{(q)_\infty^6} \equiv \frac{1}{(q)_\infty^5} + \frac{1}{(q^5)_\infty},
\end{equation}
which is the desired result.
\end{proof}

As a further and simple application of congruence (\ref{15}), we next give a quick proof of the main result of Hirschhorn-Sellers \cite{HS}, which in turn improved the work of \cite{Calkin} on 5-regular partitions. Recall that a partition of $n$ is \emph{$m$-regular} if it does not contain parts that are multiples of $m$. One  usually denotes the number of $m$-regular partitions of $n$ by $b_m(n)$, and it easily follows from the definition that the generating function for $b_m(n)$ is given by:
$$\sum_{n=0}^\infty b_m(n)q^n=\frac{(q^m)_\infty}{(q)_\infty}.$$

Denote by $\delta^{[m]}$ the density of the odd values of $b_m$, if this density exists. While it is known that, for certain $m$, $\delta^{[m]}=0$ (for instance, when $m=2^a\cdot m_0$ with $m_0\le \sqrt{m}$; see Ono-Gordon \cite{OnoG} and also Serre \cite{Serre0}), for other values of $m$ it seems reasonable to believe that $\delta^{[m]}>0$, though there exists no $m$ yet for which this has been established. (In fact, we simply remark here that if $\delta^{[m]}>0$ for some $m$, then it is easy to see that, asymptotically, $p(n)$ is odd for at least the order of $\sqrt{x}$ values of $n\le x$ --- a fact that, as we saw, is also not known.)

Currently, the best bound for $\delta^{[5]}$ is the ``striking result'' $\delta^{[5]}\le 1/4$, obtained in \cite{HS} (requiring the additional assumption, not explicitly stated in \cite{HS}, that this density exists). The next theorem is in a sense an explanation of this fact, in a way that also nicely relates it to the density of 20-regular partitions.

\begin{theorem}
Assuming existence, we have $\delta^{[5]}=\delta^{[20]}/4$; in particular, $\delta^{[5]} \le 1/4$.
\end{theorem}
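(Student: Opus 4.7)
The plan is to apply congruence (\ref{15}) twice to decompose $\sum_{n\ge 0} b_5(n)\, q^n = (q^5)_\infty/(q)_\infty$ modulo $2$ as a sum of three pieces lying in disjoint residue classes modulo $4$, one of which is directly recognizable as a shifted copy of $\sum b_{20}(n)\, q^n = (q^{20})_\infty/(q)_\infty$.

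Exactly as in the case $(5,4,1)$ treated above, using $(q)_\infty^2 \equiv (q^2)_\infty$ together with (\ref{15}) gives
\[
\sum_{n\ge 0} b_5(n)\, q^n \equiv (q^4)_\infty + q\,\frac{(q^{10})_\infty^3}{(q^2)_\infty} \pmod 2,
\]
and the first summand $(q^4)_\infty$, whose support is $4$ times the generalized pentagonal numbers, contributes density $0$ to $\delta^{[5]}$. For the second summand, I would apply (\ref{15}) with $q \mapsto q^2$ to $(q^{10})_\infty/(q^2)_\infty$, then multiply by $(q^{10})_\infty^2 \equiv (q^{20})_\infty$ and use $(q^{20})_\infty^4 \equiv (q^{80})_\infty$ to obtain
\[
\frac{(q^{10})_\infty^3}{(q^2)_\infty} \equiv (q^8)_\infty(q^{20})_\infty + q^2\,\frac{(q^{80})_\infty}{(q^4)_\infty} \pmod 2.
\]
Substituting back yields the three-term decomposition
\[
\sum_{n\ge 0} b_5(n)\, q^n \equiv (q^4)_\infty + q\,(q^8)_\infty(q^{20})_\infty + q^3\,\frac{(q^{80})_\infty}{(q^4)_\infty} \pmod 2,
\]
whose three pieces are supported in residues $0, 1, 3 \pmod 4$ respectively (class $2$ automatically giving $0$).

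Setting $u = q^4$ in the third piece recovers $q^3 \sum_{k \ge 0} b_{20}(k)\, q^{4k}$, so I read off $b_5(4k+3) \equiv b_{20}(k) \pmod 2$, which contributes exactly $\delta^{[20]}/4$ to $\delta^{[5]}$.

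The main obstacle is to show that the middle piece $q(q^8)_\infty(q^{20})_\infty$ also contributes density $0$, i.e., that $(q^8)_\infty(q^{20})_\infty$ has density-zero odd coefficients modulo $2$. I would attack this via one more application of (\ref{15}), now with $q \mapsto q^4$, giving $(q^4)_\infty(q^{20})_\infty \equiv (q^8)_\infty^3 + q^4 (q^{40})_\infty^3 \pmod 2$, a sum of two triangular-type theta series with density-zero support (via $(q)_\infty^3 \equiv \sum q^{n(n+1)/2}$), and combining this with $(q^8)_\infty \equiv (q^4)_\infty^2$ to rearrange $(q^8)_\infty(q^{20})_\infty$ into a form visibly supported on a density-zero set. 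Once this residual density-zero estimate is in hand, the identity $\delta^{[5]} = \delta^{[20]}/4$ drops out of the three-term decomposition, and the corollary $\delta^{[5]} \le 1/4$ follows immediately from the trivial bound $\delta^{[20]} \le 1$.
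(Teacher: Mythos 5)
Your three-term decomposition is correct, but it is not really a different route: after the Frobenius reductions $(q)_\infty^2\equiv(q^2)_\infty$, $(q)_\infty^4\equiv(q^4)_\infty$, etc., your congruence
$\sum_{n\ge 0}b_5(n)q^n\equiv (q^4)_\infty+q\,(q^8)_\infty(q^{20})_\infty+q^3\,(q^{80})_\infty/(q^4)_\infty \pmod{2}$
is exactly the paper's (\ref{202}), and reading off $b_5(4k+3)\equiv b_{20}(k)\pmod 2$ from the residue class $3\pmod 4$ (the other pieces being supported on $0$ and $1\pmod 4$) is fine, as is the density-zero claim for $(q^4)_\infty$.

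The gap is in your treatment of the middle piece. Applying (\ref{15}) at $q\mapsto q^4$ and multiplying by $(q^4)_\infty$ gives $(q^8)_\infty(q^{20})_\infty\equiv (q^4)_\infty(q^8)_\infty^3+q^4(q^4)_\infty(q^{40})_\infty^3\pmod 2$, but this is \emph{not} ``visibly supported on a density-zero set'': density-zero support is not preserved under multiplication of series mod $2$ (supports combine like sumsets --- the paper's own example $F(q)=\prod_{i\ge 0}(1+q^{4^i})$ with $F^3\equiv\sum_n q^n$ makes exactly this point), so knowing that $(q^8)_\infty^3+q^4(q^{40})_\infty^3$ is a sum of one-variable triangular-type theta series says nothing, by itself, about the product with $(q^4)_\infty$. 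What your rearrangement actually yields is a sum of products of a pentagonal-type and a triangular-type theta series, i.e.\ series supported on the value sets of binary quadratic polynomials such as $2k(3k-1)+4n(n+1)$; this is no more transparent than the original $(q^8)_\infty(q^{20})_\infty\equiv\sum_{m,n}q^{4m(3m-1)+10n(3n-1)}$ obtained directly from the Pentagonal Number Theorem. (Even collapsing $(q^4)_\infty(q^8)_\infty^3\equiv(q^4)_\infty^7$, essentially the $8$-regular generating function in $q^4$, does not help: its odd density being zero is itself a fact of the same nature, not an identity-level observation.) To finish, you need the classical theorem of Landau (see also Serre) that the integers represented by a quadratic form in two variables have density zero --- precisely the analytic input the paper invokes at this very step. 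Once that is cited and applied, either to your rearranged terms or directly to $(q^8)_\infty(q^{20})_\infty$, your argument closes and coincides with the paper's proof.
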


\begin{proof}
By multiplying both sides of (\ref{15}) by $1/(q)_\infty^2$, we obtain
\begin{equation}\label{200}
\sum_{n=0}^\infty b_5(n)q^n \equiv (q)_\infty^4 + q \frac{(q^{5})_\infty^6}{(q)_\infty^2}.
\end{equation}

Similarly,  multiplication by $(q^5)_\infty^2/(q)_\infty^2$ across  (\ref{15}) yields
\begin{equation}\label{201}
\frac{(q^{5})_\infty^3}{(q)_\infty}\equiv (q)_\infty^4(q^5)_\infty^2 + q \frac{(q^5)_\infty^8}{(q)_\infty^2}.
\end{equation}

Thus, combining (\ref{200}) and  (\ref{201}), we  get
\begin{equation}\label{202}
\sum_{n=0}^\infty b_5(n)q^n\equiv  (q)_\infty^4 +q(q)_\infty^8(q^5)_\infty^4  + q^3\frac{(q^5)_\infty^{16}}{(q)_\infty^4}\equiv (q)_\infty^4 +q(q)_\infty^8(q^5)_\infty^4  + \sum_{n=0}^\infty b_{20}(n)q^{4n+3}.
\end{equation}

Hence, one sees that the theorem follows if we show that the odd coefficients of $(q)_\infty^4 +q(q)_\infty^8(q^5)_\infty^4 $ have density zero. This  is clearly the case for the first summand, since by the Pentagonal Number Theorem,
$$(q)_\infty^4 \equiv \sum_{n\in \mathbb{Z}} q^{2n(3n-1)}.$$

As for $q(q)_\infty^8(q^5)_\infty^4 $, again by the Pentagonal Number Theorem we have
$$q(q)_\infty^8(q^5)_\infty^4 \equiv q \sum_{m\in \mathbb{Z}} q^{4m(3m-1)} \sum_{n\in \mathbb{Z}} q^{10n(3n-1)}.$$

But it is a classical result of Landau (see \cite{Land} or for instance Serre \cite{Serre}) that the integers that can be represented by a quadratic form in two variables have density zero. Since this obviously remains true once we reduce modulo 2, the proof of the theorem is complete.
\end{proof} 

\begin{remark}
\begin{enumerate}
\item Computer calculations suggest that it seems reasonable to conjecture that $\delta^{[5]}= 1/8$ (or equivalently, that $\delta^{[20]}=1/2$).
\item We  remark here without proof that similar nice applications to $m$-regular partitions can also be given for other values of $m$. For instance, from identity (\ref{Lin}) below, it is possible to deduce in an analogous fashion that, assuming existence, $\delta^{[7]}=\delta^{[28]}/2$ (and, consequently, that $\delta^{[7]}\le 1/2$).
\end{enumerate}
\end{remark}

We now show the remaining algebraic cases. 

\begin{proof} 
We begin with Theorem \ref{TwoTermRels} for $(13,6,1)$. Zuckerman's classical identity for $p(13n+6)$ \cite{Zuckerman} easily implies that
\begin{equation}\label{zuc}
\sum_{n=0}^\infty p(13n+6)q^n \equiv \frac{(q^{13})_\infty}{(q)_\infty^2} + q^5 \frac{(q^{13})_\infty^{11}}{(q)_\infty^{12}}+q^6\frac{(q^{13})_\infty^{13}}{(q)_\infty^{14}},
\end{equation}
while from Calkin \emph{et al.} \cite{Calkin}, we can see that
$$\frac{(q^{13})_\infty}{(q)_\infty}+(q)_\infty^{12} \equiv q (q)_\infty^{10}(q^{13})_\infty^2 + q^6(q^{13})_\infty^{12}+q^7 \frac{(q^{13})_\infty^{14}}{(q)_\infty^2}.$$ 

Thus, now multiply both sides of the latter congruence by
$$\frac{1}{(q)_\infty^{12}(q^{13})_\infty},$$
and then substitute into (\ref{zuc}). This yields the desired result. 

Case $(7,5,1)$ can be proved using the next identity, which is essentially due to Ramanujan (see \cite{BerndtOno}, equation (24.6a)):
$$\sum_{n=0}^\infty p(7n+5)q^n \equiv \frac{(q^7)_\infty^3}{(q)_\infty^4}+q\frac{(q^7)_\infty^7}{(q)_\infty^8},$$ 
along with the following, which is equivalent to an identity of Lin (cf. \cite{Lin}, equation 2.4):
\begin{equation}\label{Lin}
(q)_\infty (q^7)_\infty \equiv (q)_\infty^8 + q(q)_\infty^4 (q^7)_\infty^4 + q^2 (q^7)_\infty^8.
\end{equation}

Divide now through identity (\ref{Lin}) by $(q)_\infty^8 (q^7)_\infty$, and substitute into the previous one to complete the proof.

Case $(3,2,3)$ can be shown in a similar fashion, starting from an identity of  Chan (\cite{chan}, Theorem 1; see also  Xiong \cite{Xiong}, Theorem 1.1), which has the immediate corollary
\begin{equation}\label{xi}
\sum_{n=0}^\infty p_3(3n+2)q^n \equiv \frac{(q^3)_\infty^9}{(q)_\infty^{12}}.
\end{equation}
 
We can combine this  with \cite{HirschSell}, Theorem 2.1 (note that a power of 2 is missing from the factor $(q^{12})_\infty$ in the published version of this paper), which yields
\begin{equation}\label{HSEqn}
\frac{1}{(q)_\infty^9(q^3)_\infty^9}\equiv \frac{q}{(q)_\infty^{12}}+\frac{1}{(q^3)_\infty^{12}}.
\end{equation}

Indeed, if we multiply across this latter by  $(q^3)_\infty^9$, we easily obtain the desired result.

Case $(5,2,3)$ of Theorem \ref{TwoTermRels} is the claim that
$$q \sum_{n=0}^\infty p_3(5n+2) q^n \equiv \frac{1}{(q)_\infty^{15}} + \frac{1}{(q^5)_\infty^3}.$$

We begin with an identity of Chan-Lewis (\cite{ccll}, identity (1.11); see also Xiong \cite{Xiong2}), which modulo 2 becomes
$$ q \sum_{n=0}^\infty p_3(5n+2) q^n \equiv q \frac{(q^5)_\infty^3}{(q)_\infty^6}  + q^2 \frac{(q^5)_\infty^9}{(q)_\infty^{12}}  + q^3 \frac{(q^5)_\infty^{15}}{(q)_\infty^{18}}.$$

The third term on the right-hand side is the cube of $q \sum_{n=0}^\infty p(5n+4)q^n$, which by the $(5,4,1)$ case of Theorem \ref{TwoTermRels}, is congruent to $\frac{1}{(q)_\infty^5} + \frac{1}{(q^5)_\infty}$.  The middle term is $\frac{1}{(q^5)_\infty}$ times the square of $q \sum_{n=0}^\infty p(5n+4)q^n$, and the first term is $\frac{1}{(q^5)_\infty^2}$ times $q \sum_{n=0}^\infty p(5n+4)q^n$ itself, again using Theorem \ref{TwoTermRels} for $(5,4,1)$.

Therefore, now expand the necessary powers of $\frac{1}{(q)_\infty^5} + \frac{1}{(q^5)_\infty}$ and cancel the even terms; the remaining terms easily constitute the desired identity.


Case $(5,0,5)$ of Theorem \ref{TwoTermRels} requires the $(5,24,1)$ case of Theorem \ref{ThreeTermRels}, so we prove the latter first. This is the claim that
$$q^2 \sum_{n=0}^\infty p(25n+24) q^n \equiv \frac{1}{(q)_\infty^{25}} + \frac{1}{(q^5)_\infty^5} + \frac{q}{(q)_\infty}.$$ 

We will show it using Zuckerman's identity \cite{Zuckerman} for $\sum_{n=0}^\infty p(25n+24)q^n$ (or alternatively, \cite{BerndtOno}, equation (21.1)), which, modulo 2, gives us that
\begin{equation}\label{zzuu}
\sum_{n=0}^\infty p(25n+24)q^n \equiv \frac{(q^5)_\infty^6}{(q)_\infty^7}+q^2 \frac{(q^5)_\infty^{18}}{(q)_\infty^{19}}+q^4\frac{(q^5)_\infty^{30}}{(q)_\infty^{31}},
\end{equation}
along with a repeated application of our recently proven fact (see the case $(5,4,1)$):
\begin{equation}\label{5i}
q \frac{(q^5)_\infty^5}{(q)_\infty^6} \equiv \frac{1}{(q)_\infty^5} + \frac{1}{(q^5)_\infty}.
\end{equation}

Indeed, let us multiply through (\ref{zzuu}) by $q^2$, then treat the third term on the right-hand side as $(q)_\infty^5$ times the 6th power of (\ref{5i}), the second term as $(q)_\infty^5/(q^5)_\infty^2$ times the 4th power of (\ref{5i}), and the first term as $q (q^5)_\infty/(q)_\infty$ times (\ref{5i}) itself. Finally, cancel what terms are possible to eventually obtain:
$$q^2 \sum_{n=0}^\infty p(25n+24) q^n \equiv \frac{q}{(q)_\infty} + \frac{1}{(q)_\infty^{25}} + q \frac{(q^5)_\infty}{(q)_\infty^6} + \frac{1}{(q)_\infty^5 (q^5)_\infty^4}.$$

If now  we  again apply (\ref{5i})  (dividing by $ (q^5)_\infty^4$) to the third term on the right-hand side of (\ref{zzuu}) and then cancel, we finally obtain the desired congruence. 

The $(5,0,5)$ case of Theorem \ref{TwoTermRels} is the claim that
$$q \sum_{n=0}^\infty p_5(5n) q^n \equiv \frac{1}{(q)_\infty^{25}} + \frac{1}{(q^5)_\infty^5}.$$

In order to prove this, use the $(5,24,1)$ identity and divide through by $q$, to transform the claim into
\begin{equation}\label{2524}
\sum_{n=0}^\infty p_5(5n)q^n \equiv q \sum_{n=0}^\infty p(25n+24) q^n + \frac{1}{(q)_\infty}.
\end{equation}

Recall from the case $(5,4,1)$ that
$$q \sum_{n=0}^\infty p(5n+4) q^n \equiv \frac{1}{(q)_\infty^5} + \frac{1}{(q^5)_\infty}.$$  

Thus, now extract every power $q^{5n}$ from this identity, and equate the resulting series modulo 2. We obtain
$$q \sum_{n=0}^\infty p(25n+24) q^{5n+4} \equiv \sum_{n=0}^\infty p(n) q^{5n} + \sum_{n=0}^\infty p_5(5n)q^{5n}.$$

Making the substitution $q^5 \rightarrow q$, we then easily get:
$$q \sum_{n=0}^\infty p(25n+24)q^n \equiv \sum_{n=0}^\infty p(n)q^n + \sum_{n=0}^\infty p_5(5n) q^n.$$

This is equivalent to (\ref{2524}), thus completing the proof of  case $(5,0,5)$.

Case $(3,8,3)$ of Theorem \ref{ThreeTermRels} can be proved by 3-dissecting  (\ref{xi}).  Indeed, if we apply  (\ref{xi}) to itself and then expand the $q^{3n+2}$ terms of the denominator, we get 
$$\sum_{n=0}^\infty p_3(9n+8)\equiv q^2 \frac{(q^3)_\infty^{36}}{(q)_\infty^{39}}$$
(see also \cite{xio}, Theorem 1.2). Then, by a repeated application of (\ref{HSEqn}) (the first time by substituting $q \rightarrow q^4$), standard algebraic manipulations yield
$$q^2 \sum_{n=0}^\infty p_3(9n+8) q^n \equiv q^4 \frac{(q^3)_\infty^{36}}{(q)_\infty^{39}} \equiv (q^3)_\infty^{36} (q)_\infty^9 \left( \frac{q^4}{(q^4)_\infty^{12}} \right) $$$$ \equiv (q^3)_\infty^{36} (q)_\infty^9 \left( \frac{1}{(q^{12})_\infty^{12}} + \frac{1}{(q^4)_\infty^9 (q^{12})_\infty^9} \right) $$$$ \equiv \frac{1}{(q)_\infty^{27}} + \frac{(q)_\infty^9}{(q^3)_\infty^{12}} \equiv  \frac{1}{(q)_\infty^{27}} + \frac{1}{(q^3)_\infty^9} + \frac{q}{(q)_\infty^3},$$
which is the desired congruence for $(3,8,3)$.

Finally, case $(3,0,9)$ is proved similarly to case $(5,0,5)$, by combining our result for $(3,8,3)$ above with that for $(3,2,3)$, so we will omit the details.
\end{proof}

\section{Modular form proofs}

In this final section, we prove the cases $(11,6,1)$, $(17,5,1)$, $(19,4,1)$, $(23,1,1)$, and $(7,1,3)$ of Theorem \ref{TwoTermRels}. All of these --- and, in fact, all cases proved in this paper --- can be shown in a unified fashion using the machinery introduced by Radu in \cite{Radu} to construct and verify a new class of modular form identities of the Ramanujan-Kolberg type.

Our general strategy is as follows (please see below for the relevant definitions): We first determine an $\eta$-quotient which, when multiplied by $\sum_{n=0}^\infty p(an+b)$, yields a modular function, i.e., a modular form of weight zero.  (Whether such an $\eta$-quotient exists is a separate question; should there exist a vector which satisfies certain conditions, Radu's theorem implies that this $\eta$-quotient does exist and has exponents given by the entries of the vector.)  These forms are weakly holomorphic; we rid ourselves of the poles by multiplying by a cusp form of sufficiently high order.  We then apply a classical theorem of Sturm (see \cite{sturm}) to verify the conjectured congruence.

We first require a number of standard facts on modular forms, which we  briefly recap here without proof for the reader's convenience. For background and proofs, we refer for instance to \cite{Koblitz,Ono}.

We  consider modular forms $f$ of weight $k$ and character $\chi$ for $\Gamma_0(N)$, where this latter is defined as the subgroup of SL$(2,\mathbb{Z})$ of those matrices $\left( \begin{matrix} a & b \\ c & d \end{matrix} \right)$ such that $ c \equiv 0 \pmod{N}$, and for $\Gamma_1(N)$, which is the subgroup of $\Gamma_0(N)$ requiring the additional assumption that $a\equiv d\equiv 1$ (mod $N$). The \emph{level} of $f$ is then the least value of $N$ for which $f$ is a modular form of weight $k$ for $\Gamma_0(N)$.  Note that in such a case, $f$ is also a modular form of weight $k$ for any $\Gamma_0(cN)$, where $c \in \mathbb{N^{+}}$.

An \emph{$\eta$-quotient} is a quotient of functions of the form
$$\eta(\delta z) = q^{\frac{\delta}{24}} \prod_{i=1}^\infty (1-q^{\delta i}),$$
where $q = e^{2\pi i z}.$ We recall that an $\eta$-quotient is a modular form under the following conditions, given by a theorem of Gordon-Hughes \cite{GH} and Newman \cite{Newman}.

\begin{theorem} [\cite{GH,Newman}]\label{ghn} Let $f(z) = \prod_{\delta \vert N} \eta^{r_\delta} (\delta z)$, with $r_\delta \in \mathbb{Z}$.  If
$$\sum_{\delta \vert N} \delta r_\delta \equiv 0 \, \pmod{24} \quad \text{ and } \quad \sum_{\delta \vert N} \frac{N}{\delta} r_\delta \equiv 0 \, \pmod{24},$$
then $f(z)$ is a modular form for $\Gamma_0(N)$ of weight $k=\frac{1}{2} \sum r_\delta$ and character $\chi(d) = \left( \frac{(-1)^k s}{d} \right)$, where $s = \prod_{\delta \vert N} \delta^{r_\delta}$ and $\left(\frac{\cdot}{d}\right)$ denotes the Jacobi symbol.
\end{theorem}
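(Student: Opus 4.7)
The approach is to exploit the classical transformation law of the Dedekind $\eta$-function and reduce the modularity of $f$ to a careful bookkeeping of automorphy factors and multipliers. Recall that for any $\gamma = \left(\begin{smallmatrix} a & b \\ c & d \end{smallmatrix}\right) \in \SL(2,\Z)$ with $c>0$ one has
$$\eta(\gamma z) \;=\; \varepsilon(a,b,c,d)\,\sqrt{-i(cz+d)}\,\eta(z),$$
where $\varepsilon(a,b,c,d)$ is a $24$-th root of unity described explicitly in terms of Dedekind sums (or, for suitable parities of $c$ and $d$, via a Jacobi symbol). This is the essential analytic input.

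Given $\gamma \in \Gamma_0(N)$ and $\delta \mid N$, I would first find a matrix $\gamma_\delta \in \SL(2,\Z)$ realizing $\delta\gamma z = \gamma_\delta(\delta z)$. A direct calculation gives
$$\gamma_\delta = \begin{pmatrix} a & \delta b \\ c/\delta & d \end{pmatrix},$$
whose entries are integers precisely because $\delta\mid N\mid c$, and whose determinant is $ad-bc=1$. Applying the eta transformation law to each $\gamma_\delta$ yields $\eta(\delta\gamma z) = \varepsilon_\delta \sqrt{-i(cz+d)}\,\eta(\delta z)$, and multiplying these identities with the weights $r_\delta$ gives
$$f(\gamma z) = \bigl(-i(cz+d)\bigr)^k \cdot \prod_{\delta\mid N}\varepsilon_\delta^{r_\delta} \cdot f(z),$$
where $k=\tfrac{1}{2}\sum r_\delta$ is the intended weight.

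The heart of the argument is then to show that the total multiplier $(-i)^k\prod_\delta\varepsilon_\delta^{r_\delta}$ collapses to the asserted Jacobi symbol $\bigl(\tfrac{(-1)^k s}{d}\bigr)$ with $s=\prod\delta^{r_\delta}$. For this I would invoke the Rademacher--Dedekind formula
$$\varepsilon(a,b,c,d) \;=\; \exp\!\left(\pi i\Bigl(\tfrac{a+d}{12c} - s(d,c) - \tfrac{1}{4}\Bigr)\right),$$
with $s(d,c)$ the Dedekind sum, evaluate it at each $\gamma_\delta$, and then use classical Dedekind reciprocity together with the two congruences $\sum\delta r_\delta\equiv 0\pmod{24}$ and $\sum (N/\delta) r_\delta\equiv 0\pmod{24}$. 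These hypotheses are precisely the quantitative input needed to make the Dedekind-sum contributions telescope and cancel modulo $1$, leaving behind only the Jacobi-symbol factor that defines $\chi$.

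Finally, for $f$ to qualify as a modular form on $\Gamma_0(N)$, one must control its behavior at each cusp. Using the standard formula for the order of $\eta(\delta z)$ at the cusp $a/c$ (a rational multiple of $\gcd(c,\delta)^2/\delta$), the same congruence conditions guarantee that the relevant $q$-expansions have integral exponents, so that $f$ is at worst weakly holomorphic. The principal obstacle in this plan is the third step: the manipulation of Dedekind sums modulo $1/24$ is delicate, and extracting the Jacobi symbol from the product of multipliers requires a careful case analysis (chiefly on the parities of $c$ and $d$) together with a precise accounting of how the two divisibility hypotheses translate into exactly the cancellations one needs.
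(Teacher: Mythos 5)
Your outline is the standard route to this classical result, which the paper itself does not prove at all: Theorem~\ref{ghn} is quoted from Gordon--Hughes and Newman, so the only fair comparison is with those classical proofs, and they do indeed begin exactly as you do --- conjugating $\gamma\in\Gamma_0(N)$ by $z\mapsto\delta z$ via $\gamma_\delta=\left(\begin{smallmatrix} a & \delta b\\ c/\delta & d\end{smallmatrix}\right)$, applying the $\eta$-transformation law, and noting that every factor contributes the same automorphy factor $\sqrt{-i(cz+d)}$, so that $f(\gamma z)=\epsilon(\gamma)(cz+d)^k f(z)$ for some $24$th root of unity $\epsilon(\gamma)$. Up to that point your computation is correct.

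The genuine gap is that the step you defer --- and yourself call ``the principal obstacle'' --- is the entire content of the theorem, and nothing in your sketch actually carries it out. Two things must be proved, not asserted: (i) that $(-i)^k\prod_{\delta\mid N}\varepsilon(a,\delta b,c/\delta,d)^{r_\delta}$ depends only on $d$ (a priori each multiplier depends on the whole matrix; it is precisely the two congruences modulo $24$ that kill the dependence on $a$, $b$, $c$), and (ii) that the resulting function of $d$ is the quadratic character $\left(\frac{(-1)^k s}{d}\right)$. Extracting this from the Rademacher formula requires the classical evaluation of the Dedekind-sum exponential as a Jacobi symbol (equivalently, the closed-form eta multiplier $\left(\frac{d}{c}\right)$ or $\left(\frac{c}{d}\right)$ times an explicit root of unity, according to the parity of $c$), followed by quadratic reciprocity to convert the symbols attached to $c/\delta$ into $\left(\frac{\delta}{d}\right)$-factors whose product gives $\left(\frac{s}{d}\right)$; saying that the hypotheses ``make the contributions telescope'' is a placeholder for this computation, not a proof of it. Without it you have only shown $f$ transforms with \emph{some} multiplier system of weight $k$, which is strictly weaker than the statement. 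Lesser loose ends: the transformation law you quote holds only for $c>0$, so $c=0$ (where $\sum\delta r_\delta\equiv 0\pmod{24}$ is exactly what gives invariance under $z\mapsto z+1$) and $c<0$ need a word; and if $\sum r_\delta$ is odd the weight is half-integral and the branch of $\sqrt{-i(cz+d)}$ must be fixed consistently. Your closing remark that $f$ is in general only weakly holomorphic is correct and matches how the paper actually uses the theorem (it clears poles by multiplying by powers of $\eta(4z)^{24}$ before invoking Sturm's bound).
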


Given two modular forms of weight $k$ for $\Gamma_0(N)$, the following crucial result of Sturm  \cite{sturm} gives a criterion for determining when all of their coefficients are congruent modulo a given prime.

\begin{theorem} [\cite{sturm}]\label{sturmthe} Let $p$ be a prime number, and $f(z) = \sum_{n=n_0}^\infty a(n) q^n$ and $g(z) = \sum_{n=n_1}^\infty b(n) q^n$  be holomorphic modular forms of weight $k$ for $\Gamma_0(N)$ of characters $\chi$ and $\psi$, where $n_0,n_1\in \mathbb{N}$.  If either $\chi=\psi$ and 
$$a(n) \equiv b(n) \pmod{p} \quad \text{for all} \quad n\le \frac{kN}{12}\cdot \prod_{d{\ }\emph{prime};{\ } d \vert N} \left(1+\frac{1}{d}\right),$$
or $\chi \neq \psi$ and
$$a(n) \equiv b(n) \pmod{p} \quad \text{for all} \quad n\le \frac{kN^2}{12}\cdot \prod_{d{\ }\emph{prime};{\ } d \vert N} \left(1-\frac{1}{d^2}\right),$$
then $f(z) \equiv g(z) \pmod{p}$ (i.e., $a(n) \equiv b(n) \pmod{p}$ for all $n$).
\end{theorem}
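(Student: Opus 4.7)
The plan is to follow the classical route via the $k/12$ valence formula on the modular curve. First I would reduce to the single-form statement: setting $h = f - g$, one wants to show that if $h \in M_k(\Gamma_0(N), \chi)$ has coefficients lying in the ring of integers $\mathcal{O}$ of a number field containing all Fourier coefficients of $f$ and $g$, and if the first $B$ coefficients of $h$ all lie in a prime $\mathfrak{p}$ of $\mathcal{O}$ above $p$ (with $B$ the stated Sturm bound), then every coefficient of $h$ lies in $\mathfrak{p}$.

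For the equal-character case, $h$ lives in $M_k(\Gamma_0(N), \chi)$ directly. For $\chi \neq \psi$, one passes to $\Gamma_1(N)$: both $f$ and $g$ are still modular forms there, and the index $[\mathrm{SL}_2(\mathbb{Z}):\Gamma_1(N)] = N^2 \prod_{d\vert N}(1 - 1/d^2)$ combined with the $k/12$ formula yields exactly the weaker bound stated.

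The core argument is the valence formula: a nonzero holomorphic modular form of weight $k$ on a congruence subgroup $\Gamma$ of index $m$ in $\mathrm{SL}_2(\mathbb{Z})$ has total (weighted) order of vanishing $km/12$ over the compactified fundamental domain. In particular, the order of vanishing at the infinity cusp is at most $km/12 = B$, so if the first $B$ Fourier coefficients of $\bar h := h \bmod \mathfrak{p}$ vanish, then $\bar h$ must be the zero mod-$\mathfrak{p}$ form, giving the desired conclusion.

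The main obstacle is justifying the valence formula in characteristic $p$, rather than merely in characteristic zero where it is classical. The cleanest route I would take is through Deligne-Rapoport's integral model of $X_0(N)$: one views $\bar h$ as a global section of the Hodge bundle $\omega^{\otimes k}$ on $X_0(N)$ over $\bar{\mathbb{F}}_p$, invokes the $q$-expansion principle to identify the order of vanishing at the infinity cusp with the $q$-adic order of the reduced $q$-expansion, and applies the fact that $\deg \omega^{\otimes k} = km/12$, which holds in any characteristic (with the standard minor modifications at $p=2,3$ handled by passing to an auxiliary level structure). An elementary alternative is to stay in characteristic zero and use dimension counting, noting that $\dim M_k(\Gamma_0(N), \chi) \leq B + 1$, combined with an induction on the weight via multiplication by $\Delta(z) = \eta(z)^{24}$; this avoids integral models but is technically more fiddly, so I would prefer the geometric route.
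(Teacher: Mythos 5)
You should first note that the paper contains no proof of this statement at all: it is quoted from Sturm's paper and used as a black box, so your sketch can only be judged against the standard proofs in the literature. Your overall architecture is the right one: pass to $h=f-g$; when $\chi\neq\psi$ view $h$ as a form on $\Gamma_1(N)$, whose index $N^2\prod_{d\mid N}(1-1/d^2)$ in $\mathrm{SL}_2(\mathbb{Z})$ does reproduce the stated (non-sharp but valid) second bound; and then argue that a mod-$\mathfrak{p}$ form vanishing at $\infty$ to order beyond $\frac{k}{12}[\mathrm{SL}_2(\mathbb{Z}):\Gamma]$ must vanish identically. In the case $p\nmid N$ the geometric argument you outline (sections of $\omega^{\otimes k}$ on the modular curve over $\bar{\mathbb{F}}_p$, the $q$-expansion principle, $\deg\omega^{\otimes k}=km/12$, auxiliary rigidifying level structure) is indeed the standard modern proof.

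The genuine gap is the case $p\mid N$, which the statement permits and which is precisely the case this paper needs: here $p=2$ and every level that occurs is even ($N=44$, $N=28$, and the level-$4m$ forms obtained after multiplying by powers of $\eta(4z)^{24}$). When $p\mid N$ the curve $X_0(N)$ over $\bar{\mathbb{F}}_p$ is neither smooth nor irreducible --- the Deligne--Rapoport model degenerates into several components crossing at supersingular points --- so the step ``a nonzero section of a line bundle of degree $km/12$ vanishes at $\infty$ to order at most $km/12$'' is not available as stated: the $q$-expansion principle at $\infty$ only controls the component through $\infty$, the degree of $\omega^{\otimes k}$ is distributed among components, and a section can vanish identically on the cuspidal component without vanishing globally; moreover Katz's $q$-expansion principle is normally set up for level prime to $p$. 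To cover $p\mid N$ you need an additional input, e.g.\ Sturm's original argument (take the norm $\prod_{\gamma}h|_k\gamma$ over coset representatives down to level one, using $p$-integrality, or at least bounded denominators, of the slashed expansions, and then work mod $\mathfrak{p}$ at level one by dividing by powers of $\Delta$), or a preliminary reduction, mod $p$, of forms of level $Np^r$ to forms of prime-to-$p$ level and higher weight before running your geometric argument. Separately, your ``elementary alternative'' is not yet an argument: the bound $\dim M_k\le B+1$ says nothing about mod-$p$ vanishing unless one also produces an echelonized basis with unit pivots defined over the integers, which again requires $q$-expansion-principle-type input; since you offered it only as a fallback, this is a minor point compared with the $p\mid N$ issue.
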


Our primary tool is to suitably apply Radu \cite{Radu}, Theorem 45.  We restate it here to keep the paper self-contained, which will require the following notation and definitions.

Let $R(N)$ denote the set of integer sequences with entries $r_\delta$ indexed by the positive divisors of $N$.  For $r=(r_\delta)_{\delta \vert N} \in R(N)$, define
$$w(r) = \sum_{\delta \vert N} r_\delta \, ; \quad \sigma_\infty (r) = \sum_{\delta \vert N} \delta r_\delta \, ; \quad \sigma_0(r) = \sum_{\delta \vert N} \frac{N}{\delta} r_\delta \, ; \quad \Pi(r) = \prod_{\delta \vert N} \delta^{\vert r_\delta \vert}.$$

Now denote by $\Delta^*$ the set of tuples $(m,M,N,t,(r_\delta)) \in (\mathbb{N^{+}})^3  \times \mathbb{N} \times R(M)$ such that the following conditions are simultaneously satisfied: For all primes $p$, if $p\vert m$ then $p\vert N$; if $\delta \vert M$ and $r_\delta \neq 0$, then $\delta \vert mN$; $t \in \{0,\dots , m-1\}$; finally, if $\kappa = \gcd(1-m^2,24) \geq 1$, then
$$24 {\ }\vert {\ } \frac{\kappa mN^2 \sigma_0(r)}{M}; \quad 8 {\ }\vert {\ } \kappa N w(r); \quad \frac{24m}{\gcd(\kappa (-24t-\sigma_\infty(r)),24m)} {\ }\vert {\ } N;$$
and if $m$ is even and $\prod_{\delta \vert M} \delta^{\vert r_\delta \vert} = 2^s j$ with $j$ odd, then either $4 \vert \kappa N$ and $8 \vert Ns$, or $s$ is even and $ 8 \vert N(1-j)$.  (This latter condition is included for completeness but never applies in this paper.)

Let $g_{m,t}(q)$ be the \emph{$(mn+t)$-dissection} of the $\eta$-quotient defined by the exponents $r_\delta$: That is, for a given $r=(r_\delta) \in R(M)$, let $\sum_{n=0}^\infty a_r(n) q^n = \prod_{\delta \vert M} \prod_{i=1}^\infty (1-q^{\delta i})^{r_\delta}$, and define
$$g_{m,t} (q) = q^{\frac{24t+\sigma_\infty (r)}{24m}} \sum_{n=0}^\infty a_r(mn+t)q^n.$$

For $m,M \in \mathbb{N}^{+}$, now let
$$P_{m,r}(t) = \left\{ \left[ ta^2 + \frac{a^2-1}{24} \sigma_\infty (r) \right] _m : \gamma = \left( \begin{matrix} a & b \\ c & d \end{matrix} \right) \in \Gamma_0(M), a\equiv 1, 5 \pmod{6} \right\},$$
where $[n]_m$ denotes the least nonnegative integer congruent to $n \pmod{m}$.  Finally, set
$$\chi_{m,r}(t) = \prod_{u \in P_{m,r}(t)} \exp((1-m^2)(24u+\sigma_\infty(r))/(24m)),$$
where as usual $\exp(x) = e^{2\pi i x}$. Given these definitions,  Radu's theorem is as follows.

\begin{theorem} [\cite{Radu}]\label{RaduThm}
Let $(m,M,N,t,r=(r_\delta)) \in \Delta^*$, $s=(s_\delta) \in R(N)$, and $\nu$ an integer such that $\chi_{m,r}(t) = \exp(\nu/24)$.  (Such a $\nu$ exists by Lemma 43, \cite{Radu}.) Then $$F(s,r,m,t) (z) = \prod_{\delta \vert N} \eta^{s_\delta} (\delta z) \prod_{u \in P_{m,r}(t)} g_{m,u}(q)$$
is a modular form of weight zero with trivial character for $\Gamma_0(N)$ if and only if the following conditions hold:
\begin{align}
\vert P_{m,r} (t) \vert \cdot w(r) + w(s) = 0; \\
 \nu + \vert P_{m,r} (t) \vert \cdot m \sigma_\infty (r) + \sigma_\infty (s) \equiv 0 \pmod{24}; \\
\vert P_{m,r} (t) \vert \cdot \frac{mN\sigma_0(r)}{M} + \sigma_0(s) \equiv 0 \pmod{24}; \\
\left(\prod_{\delta \vert M} (m \delta)^{\vert r_\delta \vert } \right)^{\vert P_{m,r} (t) \vert }\cdot  \Pi(s) \, \text{ is a square.}
\end{align}

\end{theorem}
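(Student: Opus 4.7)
The plan is to decompose the argument into two parts: first, pin down how the symmetric product of dissection factors $\prod_{u \in P_{m,r}(t)} g_{m,u}(q)$ transforms under $\Gamma_0(M)$; then combine this with the transformation law of the auxiliary $\eta$-quotient $\prod_{\delta \vert N} \eta^{s_\delta}(\delta z)$ and use standard criteria to determine when the combined object is a weight-zero modular form with trivial character for $\Gamma_0(N)$.

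For the first part, I would begin from the root-of-unity formula for an $m$-dissection: if $f(q) = \sum a(n)q^n$, then
$$\sum_{n=0}^\infty a(mn+t)q^{mn+t} = \frac{1}{m}\sum_{j=0}^{m-1}\zeta^{-jt} f(\zeta^j q), \qquad \zeta = e^{2\pi i/m}.$$
Applied to the $\eta$-quotient defined by $(r_\delta)_{\delta \vert M}$, this expresses $g_{m,t}$ as an explicit sum of twists. A matrix $\gamma \in \Gamma_0(M)$ with upper-left entry $a$ acts on each $\eta(\delta z)$ by the Gordon--Hughes--Newman recipe, and on the root-of-unity twist by a controlled relabeling; tracking both effects shows that $\gamma$ sends $g_{m,t}$ to a scalar multiple of $g_{m,t'}$ where $t' \equiv ta^2 + \frac{a^2 - 1}{24}\sigma_\infty(r) \pmod{m}$. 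This is precisely the rule defining $P_{m,r}(t)$, so the elements of $\Gamma_0(M)$ permute the factors of $\prod_{u \in P_{m,r}(t)} g_{m,u}(q)$ and this product is $\Gamma_0(M)$-invariant up to an explicit scalar, in which the constant $\chi_{m,r}(t) = \exp(\nu/24)$ accumulates those scalars over the orbit.

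For the second part, both factors must transform compatibly under $\Gamma_0(N)$. The $\eta$-quotient $\prod_{\delta \vert N}\eta^{s_\delta}(\delta z)$ transforms under $\Gamma_0(N)$ as in Theorem \ref{ghn}, while the $\Gamma_0(M)$-invariance of the dissection product descends through the compatibility between $M$, $N$, $m$, and $\kappa$ baked into $\Delta^*$. Demanding that the full product be a modular function with trivial character then yields four constraints in bijection with the four conditions of the theorem: the weight-zero relation $\vert P_{m,r}(t)\vert\, w(r) + w(s) = 0$; the cancellation at the cusp $\infty$ of the phase $\exp(\nu/24)$ against that of the $\eta$-quotient, which becomes $\nu + \vert P_{m,r}(t)\vert \cdot m\sigma_\infty(r) + \sigma_\infty(s) \equiv 0 \pmod{24}$; the analogous integrality condition at the cusp $0$, namely $\vert P_{m,r}(t)\vert \cdot mN\sigma_0(r)/M + \sigma_0(s) \equiv 0 \pmod{24}$; and the triviality of the Jacobi-symbol character furnished by Theorem \ref{ghn}, which forces the product $\left(\prod_{\delta \vert M}(m\delta)^{\vert r_\delta \vert}\right)^{\vert P_{m,r}(t)\vert} \cdot \Pi(s)$ to be a perfect square.

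The main obstacle will be the bookkeeping in the first part: one must verify that for \emph{every} $\gamma \in \Gamma_0(M)$ (not only those with $a \equiv 1, 5 \pmod{6}$ singled out in the definition of $P_{m,r}(t)$) the action genuinely permutes the chosen family of dissections rather than mixing in outsiders, and that the accumulated scalars combine to a single well-defined $24$-th root of unity independent of choices. This is precisely the role of the arithmetic hypotheses packaged into $\Delta^*$ --- the divisibilities by $24$, $8$, and $\kappa = \gcd(1 - m^2, 24)$, together with the parity-of-$m$ clause --- which are tailored so that every such ambiguity vanishes, as in the technical lemmas leading up to Theorem 45 of \cite{Radu}. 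Once these are in place, what remains is a clean repackaging of the Gordon--Hughes--Newman criterion.
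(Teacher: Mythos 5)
There is nothing in this paper to compare your argument against: Theorem \ref{RaduThm} is not proved here at all. It is Radu's Theorem 45, restated verbatim from \cite{Radu} ``to keep the paper self-contained,'' and the authors rely on Radu's published proof (together with his Lemma 43 for the existence of $\nu$). So the only question is whether your sketch would stand on its own as a proof of Radu's theorem, and as written it would not, although its broad architecture --- root-of-unity dissection of the $\eta$-quotient attached to $r$, the action of matrices with $c\equiv 0 \pmod M$ permuting the slices according to $t\mapsto ta^2+\frac{a^2-1}{24}\sigma_\infty(r)$, accumulation of the scalars into $\chi_{m,r}(t)=\exp(\nu/24)$, and then matching against the transformation of $\prod_\delta \eta^{s_\delta}(\delta z)$ --- does mirror the strategy of Radu's actual argument.

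The genuine gaps are these. First, the theorem is an equivalence, and your outline only addresses sufficiency; nothing in it explains why failure of any one of conditions (1)--(4) destroys modularity or trivializes to a nontrivial character. Second, the heart of the matter --- that each $g_{m,u}$, with its fractional prefactor $q^{(24u+\sigma_\infty(r))/(24m)}$, transforms under the relevant congruence subgroup into a root of unity times $g_{m,u'}$ with $u'$ in the same orbit, and that the product of these roots of unity over the orbit is a well-defined $24$th root of unity --- is exactly the content of the technical lemmas preceding Theorem 45 in \cite{Radu}; you assert it and defer it to ``the arithmetic hypotheses packaged into $\Delta^*$,'' but verifying it requires the explicit Dedekind $\eta$ multiplier formula (Dedekind sums), not Theorem \ref{ghn}, which gives only sufficient congruence conditions and cannot be applied to the $g_{m,u}$ since they are not $\eta$-quotients. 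Third, the bookkeeping that converts ``weight zero, trivial character at all cusps'' into precisely the four stated conditions (the $\pmod{24}$ conditions at $\infty$ and $0$, and the square condition giving triviality of the Jacobi-symbol character) again needs the exact multiplier of $\prod_\delta\eta^{s_\delta}(\delta z)$, whereas your argument cites \ref{ghn} only as a black box. In short: a faithful outline of Radu's route, but the load-bearing computations are precisely the parts left unproved.
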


We are now ready to  prove the remaining congruences left open in the previous section.

\begin{proof} 
We  show in detail the cases $(11,6,1)$ and $(7,1,3)$ of Theorem \ref{TwoTermRels}.  The other congruences are then simply a matter of finding the appropriate $\eta$-quotients and necessary multipliers to absorb poles, which we will list at the end for the reader's convenience.

We begin with case $(11,6,1)$.  We have $m=11$, $M=1$, $N=22$, $t=6$, and $(r_1)=(-1)$. We verify that this $(m,M,N,t,r)$-vector exists in $\Delta^*$. 

Consider now $ta^2+\frac{a^2-1}{24}\sigma_{\infty}(r)\pmod{m}$, i.e., $6a^2-\frac{a^2-1}{24} \pmod{11}$. Since $24^{-1}=6$  in the field $\mathbb{F}_{11}$, we  have that this expression is identically 6 for all $a$. Thus, $P_{m,r} (t) = \{6\}$.

Now we move to the conditions in Theorem \ref{RaduThm}. We notice that it suffices to choose $\nu=24$, and that we need a vector $s$ such that $\omega(s)=1$, $\sigma_{\infty}\equiv 11 \pmod{24}$, $\sigma_o\equiv 2 \pmod{24}$, and such that  $11 \cdot \Pi(s)$ is a square. One can compute that $s=(s_1,s_2,s_{11},s_{22})=(10,2,11,-22)$ satisfies the above conditions (here the divisors indexing the entries are the positive divisors of 22 in increasing order: $1, 2, 11, 22$). 

Therefore, we can construct $$F(s,r,11,6)=q^{\frac{13}{24}}\frac{\eta(z)^{10}\eta(2z)^2\eta(11z)^{11}}{\eta(22z)^{22}} \sum_{n=0}^\infty p(11n+6)q^n.$$

Recall that the congruence that we want to show is
$$q\sum_{n=0}^\infty p(11n+6)q^n \equiv \frac{1}{(q)_\infty^{11}}+\frac{1}{(q^{11})_\infty},$$
which, in terms of $\eta$-products,  becomes
\begin{equation}\label{116}
q\sum_{n=0}^\infty p(11n+6)q^n \equiv \frac{q^{\frac{11}{24}}}{\eta(z)^{11}}+\frac{q^{\frac{11}{24}}}{\eta(11z)}.
\end{equation}

Multiplying through (\ref{116}) by the $\eta$-quotient required to construct $F$, on the left-hand side we obtain 
$$q^{\frac{13}{24}}\frac{\eta(z)^{10}\eta(2z)^2\eta(11z)^{11}}{\eta(22z)^{22}}\sum_{n=0}^\infty p(11n+6)q^n.$$

By Theorem \ref{RaduThm}, this is a modular form of weight zero. Similarly, the right-hand side becomes
$$\frac{\eta(z)^{10}\eta(2z)^2\eta(11z)^{11}}{\eta(22z)^{22}}\left( \frac{1}{\eta(z)^{11}}+\frac{1}{\eta(11z)}\right ) \equiv\frac{\eta(2z)^2\eta(11z)^{11}}{\eta(z)\eta(22z)^{22}}+\frac{\eta(z)^{10}\eta(2z)^2\eta(11z)^{10}}{\eta(22z)^{22}}$$
$$\equiv\frac{\eta(4z)\eta(11z)^{11}}{\eta(z)\eta(44z)^{11}}+\frac{\eta(z)^{10}\eta(2z)^2\eta(11z)^{10}}{\eta(22z)^{22}},$$
where in the last step we have used the fact that, modulo 2, $\eta(z)^2\equiv \eta(2z)$.

By Theorem \ref{ghn}, both of these terms are weight zero modular forms.  The sum of two such modular forms is also a modular form of weight zero with trivial character (possibly for $\Gamma_0(N)$ with $N$ the least common multiple of the $N$ for each form separately).  We cannot yet apply Sturm's theorem since the forms have poles at infinity (the $q$-expansion yields terms of order $q^{-15}$), and possibly other cusps. (Note that all latter cusps can be represented by rational numbers of the form $\frac{c}{d}$, where $d \vert N$ and $\gcd(c,d)=1$; see e.g. \cite{Ono}.) However, the product of two modular forms of weight $k$ for $\Gamma_0(N)$ and weight $\ell$ for $\Gamma_0(L)$ is a modular form of weight $k+\ell$ for $\Gamma_0(\lcm(N,L))$, and so by Theorem \ref{ghn} we can multiply both sides by a power of $\eta(4z)^{24} = q^4 \prod_{i=1}^\infty (1-q^{4i})^{24}$, or another such form of sufficient power to remove all poles.

In order to determine the minimum order necessary for the right-hand side, we employ a theorem of Ligozat on $\eta$-quotients:

\begin{theorem}[\cite{Ligo,Ono}]\label{ligozat} Let $c$, $d$ and $N$ be integers such that $d \vert N$ and $\gcd(c,d)=1$.  Then if $f$ is an $\eta$-quotient satisfying the conditions of Theorem \ref{ghn} for $\Gamma_0(N)$, the order of $f$ at the cusp $\frac{c}{d}$ is
$$\frac{N}{24} \sum_{\delta \vert N} \frac{\left(\gcd(d,\delta)\right)^2 r_\delta}{\gcd \left(d,\frac{N}{d}\right) d \delta}.$$
\end{theorem}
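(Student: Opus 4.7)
The plan is to compute the order of each factor $\eta(\delta z)^{r_\delta}$ at the cusp $c/d$ directly from the transformation behavior of the Dedekind $\eta$-function and then sum with the weights $r_\delta$. Recall the classical transformation formula: for $\gamma = \left( \begin{matrix} a & b \\ c' & d' \end{matrix} \right) \in \SL(2,\mathbb{Z})$ with $c' > 0$, one has $\eta(\gamma z) = \varepsilon(\gamma)\,\sqrt{-i(c'z+d')}\,\eta(z)$, where $\varepsilon(\gamma)$ is a 24th root of unity depending on $\gamma$. Since orders add across an $\eta$-quotient, it suffices to handle one factor at a time.

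First I would fix a matrix $M \in \SL(2,\mathbb{Z})$ sending $i\infty$ to $c/d$; since $\gcd(c,d)=1$, one may take $M = \left( \begin{matrix} c & * \\ d & * \end{matrix} \right)$. For each divisor $\delta \mid N$, set $g_\delta = \gcd(d,\delta)$ and produce a factorization
$$\left( \begin{matrix} \delta & 0 \\ 0 & 1 \end{matrix} \right) M \;=\; \gamma_\delta \cdot A_\delta, \qquad A_\delta = \left( \begin{matrix} g_\delta & \beta_\delta \\ 0 & \delta/g_\delta \end{matrix} \right),$$
with $\gamma_\delta \in \SL(2,\mathbb{Z})$ and $\beta_\delta \in \mathbb{Z}$. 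Such a $\gamma_\delta$ exists because $\gcd(\delta/g_\delta, d/g_\delta) = 1$, which allows one to realize an $\SL(2,\mathbb{Z})$ matrix with bottom-left entry $d/g_\delta$ and top-left entry $c\delta/g_\delta$. This is the standard factorization used to expand $\eta(\delta z)$ in local coordinates at cusps.

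Applying the transformation formula then gives $\eta(\delta M z) = \eta(\gamma_\delta A_\delta z) = \varepsilon(\gamma_\delta)\,\sqrt{-i(c_\delta A_\delta z + d_\delta)}\,\eta(A_\delta z)$, where $c_\delta, d_\delta$ denote the bottom row of $\gamma_\delta$. Expanding $\eta(A_\delta z) = e^{2\pi i A_\delta z/24}\prod_{n\geq 1}(1 - e^{2\pi i n A_\delta z})$ and noting $A_\delta z = g_\delta^2 z/\delta + g_\delta\beta_\delta/\delta$, the leading order in the variable $q = e^{2\pi i z}$ is $g_\delta^2/(24\delta)$. To pass from this to the order in the local uniformizer at the cusp $c/d$ of $\Gamma_0(N)$, I would multiply by the classical cusp width $h = N/\gcd(d^2,N) = N/(d\gcd(d,N/d))$, obtaining
$$\ord_{c/d}(\eta(\delta z)) = \frac{N\,\gcd(d,\delta)^2}{24\,d\,\delta\,\gcd(d,N/d)}.$$
Summing over $\delta$ with weights $r_\delta$ gives the stated formula.

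The main obstacles are bookkeeping ones rather than conceptual. One must verify that the formula is independent of the choices of $M$, $\gamma_\delta$, and $\beta_\delta$; confirm that the automorphy factor $\sqrt{-i(c_\delta A_\delta z + d_\delta)}$ and the root of unity $\varepsilon(\gamma_\delta)$ are units at the cusp (so that they contribute to the leading Fourier coefficient but not to the order of vanishing); and invoke the classical cusp-width calculation for $\Gamma_0(N)$. A complete proof along these lines appears in Ligozat's original paper, and the result is nicely presented in Ono's textbook \cite{Ono}; for our purposes it will be enough to quote the statement and apply it to the explicit $\eta$-quotients built in the preceding paragraphs.
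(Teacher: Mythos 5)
The paper offers no proof of this statement at all: Theorem \ref{ligozat} is quoted verbatim from Ligozat and from Ono's book \cite{Ligo,Ono} as one of the ``standard facts'' needed to clear poles before applying Sturm's bound, so there is no internal argument to compare yours against. Your sketch is the standard (and essentially correct) derivation of that classical result: factor $\left( \begin{smallmatrix} \delta & 0 \\ 0 & 1 \end{smallmatrix} \right)M$ as an $\SL(2,\mathbb{Z})$ matrix times an upper-triangular matrix $\left( \begin{smallmatrix} g_\delta & \beta_\delta \\ 0 & \delta/g_\delta \end{smallmatrix} \right)$ with $g_\delta=\gcd(d,\delta)$ (possible because $\gcd(c\delta/g_\delta, d/g_\delta)=1$), read off the leading exponent $g_\delta^2/(24\delta)$ from the product expansion of $\eta$, multiply by the cusp width $N/\gcd(d^2,N)=N/(d\gcd(d,N/d))$, and sum against the $r_\delta$; this reproduces the displayed formula exactly. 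The bookkeeping items you flag are the genuine ones: $\beta_\delta$ must be chosen so that $\gamma_\delta$ has integer entries, the $24$th root of unity and the factor $\sqrt{-i(c_\delta A_\delta z+d_\delta)}$ are nonvanishing at the cusp and so do not affect the order, the answer must be checked independent of the choice of $M$ and of the factorization, and the multiplication by the width is precisely what converts the exponent in $q$ into the order with respect to the local uniformizer at $c/d$ (the convention used in \cite{Ono}, Theorem 1.65). For the purposes of this paper, quoting the statement, as you ultimately propose and as the authors do, is entirely sufficient.
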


For the left-hand side we invoke Theorem 47 of \cite{Radu}, which provides a lower bound on the order of $F(s,r,m,t)$ at any such cusp:

\begin{theorem}[\cite{Radu}, Theorem 47, and Eqns. (56-57)]\label{RaduThm47}
For $F(s,r,m,t)$ as constructed above, the order of $F$ at any cusp of $\Gamma_0(N)$ is uniformly  bounded from below by
$$ {{\emph{min}} \atop {c \vert N}} \frac{N}{\gcd(c^2,N)} \left( \vert P_{m,r}(t) \vert {{\emph{min}} \atop {{d \vert m} \atop {\gcd(d,c)=1}}} \frac{1}{24} \sum_{\delta \vert M} r_\delta \frac{(\gcd(\delta d,mc))^2}{\delta m} + \frac{1}{24} \sum_{\delta \vert N} s_\delta \frac{(\gcd(\delta,c))^2}{\delta} \right).$$
\end{theorem}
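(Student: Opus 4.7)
The plan is to follow the structure of Radu's original argument in \cite{Radu}: decompose $F(s,r,m,t)$ into its two multiplicative pieces and bound the order of each at every cusp $a/c$ of $\Gamma_0(N)$, using the standard fact that every such cusp is equivalent to one with $c\mid N$ and $\gcd(a,c)=1$, and that its width in $\Gamma_0(N)$ equals $N/\gcd(c^2,N)$. The uniform lower bound will then follow by summing the two contributions at $a/c$ and minimizing over $c\mid N$.

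First I would handle the $\eta$-quotient factor $\prod_{\delta\mid N}\eta^{s_\delta}(\delta z)$: this is an honest $\eta$-quotient on $\Gamma_0(N)$, so Ligozat's formula (Theorem \ref{ligozat}) gives its order at $a/c$ exactly as $\frac{N}{24}\sum_{\delta\mid N} s_\delta (\gcd(\delta,c))^2/(\gcd(c,N/c)\,c\,\delta)$, and multiplying by the cusp width pulls out the factor $N/\gcd(c^2,N)$ that appears in the statement while leaving the inner sum $\tfrac{1}{24}\sum_{\delta\mid N}s_\delta(\gcd(\delta,c))^2/\delta$. This is the easy piece.

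The harder piece is the dissection product $\prod_{u\in P_{m,r}(t)}g_{m,u}(q)$, since each $g_{m,u}$ is not itself an $\eta$-quotient. The key idea, as in Radu's proof, is that for any matrix $\gamma=\left(\begin{smallmatrix} a&b\\c&d\end{smallmatrix}\right)\in\mathrm{SL}_2(\mathbb Z)$ sending $\infty$ to $a/c$, one can express the sum $\sum_{u\in P_{m,r}(t)}g_{m,u}(q)\big|\gamma$ as a single $\eta$-quotient expression on an auxiliary congruence subgroup (this is, in essence, why the set $P_{m,r}(t)$ is defined via the $\Gamma_0(M)$-orbit): the operator of $m$-dissection commutes with the modular action only after averaging over the relevant orbit. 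Then the order at $a/c$ of each $g_{m,u}$ can be read off by applying Ligozat's formula to the underlying $\eta$-quotient $\prod_{\delta\mid M}\eta^{r_\delta}(\delta z)$, evaluated at the modified cusp parametrized by pairs $(d,c)$ with $d\mid m$, $\gcd(d,c)=1$; this produces the inner expression $\frac{1}{24}\sum_{\delta\mid M} r_\delta (\gcd(\delta d,mc))^2/(\delta m)$. The minimum over such $d$ records that distinct $d$ correspond to distinct cusps of $\Gamma_0(mM)$ lying over $a/c\in\Gamma_0(N)$, and only a lower bound survives when one passes back down. Multiplying by $|P_{m,r}(t)|$ accounts for the number of summands and yields the first term in the parenthesis.

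The main obstacle, and the technically delicate step, is justifying the interchange of the dissection operator with the action of $\gamma$ and identifying the orbit-averaged expression as a controllable $\eta$-quotient; this is where Radu's Lemmas 42--44 do the real work, and one must verify that the exponents $\kappa$, $\nu$, and the residue classes in $P_{m,r}(t)$ interact consistently with the cusp width $N/\gcd(c^2,N)$. Once this is in hand, summing the two contributions at each $c\mid N$, minimizing, and observing that the expression is independent of the specific representative $a$ completes the proof. In the present paper, this bound is then used to determine the exact power of a holomorphic cusp form like $\eta(4z)^{24}$ needed to absorb all poles of $F(s,r,m,t)$, after which Sturm's theorem (Theorem \ref{sturmthe}) dispatches each remaining case of Theorem \ref{TwoTermRels}.
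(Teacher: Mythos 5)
First, a point of comparison: the paper offers no proof of this statement at all --- it is imported verbatim as Theorem 47 (together with Eqns.\ (56)--(57)) of Radu \cite{Radu}, and is used purely as a black box, alongside Theorem \ref{ligozat}, to decide how large a power of $\eta(4z)^{24}$ is needed to clear poles before Sturm's theorem (Theorem \ref{sturmthe}) is applied. So your proposal is not being measured against an internal argument; it is an attempted reconstruction of Radu's own proof. At the level of architecture it is faithful to that proof: handle $\prod_{\delta\mid N}\eta^{s_\delta}(\delta z)$ by Ligozat's formula, and control the factors $g_{m,u}$, which are not themselves $\eta$-quotients, through their behavior under matrices carrying $\infty$ to a cusp $a/c$ with $c\mid N$, the set $P_{m,r}(t)$ being engineered so that the slash action permutes these factors.

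However, as a proof the proposal has a genuine gap exactly at the step you yourself label the main obstacle. The transformation law for $g_{m,u}\vert\gamma$, the fact that the factors indexed by $P_{m,r}(t)$ are permuted up to explicit automorphy factors, and above all the derivation of the lower bound $\frac{1}{24}\sum_{\delta\mid M} r_\delta\,\gcd(\delta d,mc)^2/(\delta m)$, minimized over $d\mid m$ with $\gcd(d,c)=1$, are asserted and deferred to ``Radu's Lemmas 42--44'' rather than established; but that computation (expressing $g_{m,u}\vert\gamma$ through generalized $\eta$-quotients and extracting the exponents $\gcd(\delta d,mc)^2$) \emph{is} the content of the theorem, so deferring it leaves essentially nothing proved. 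Two further inaccuracies: $F$ involves the product $\prod_{u\in P_{m,r}(t)} g_{m,u}$, not a sum, and no averaging takes place --- what one actually uses is that the order of a product at a cusp is the sum of the orders of its (permuted) factors, which is precisely why $\vert P_{m,r}(t)\vert$ multiplies the minimum; and for the $\eta$-quotient piece you should not multiply Ligozat's order by the cusp width, since for $c\mid N$ one has $\gcd(c^2,N)=c\,\gcd(c,N/c)$, so Ligozat's expression already \emph{equals} $\frac{N}{\gcd(c^2,N)}\cdot\frac{1}{24}\sum_{\delta\mid N}s_\delta\gcd(\delta,c)^2/\delta$; inserting an extra width factor would double count and distort the pole bookkeeping that this theorem is used for.
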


Note that, in all of our formulas, $\vert P_{m,r}(t) \vert = 1$.  

In the case of $(a,b,t)=(11,6,1)$, the minimum order appearing when we employ the above theorems with $N=44$ is $-15$, and therefore we must remove poles.  Multiplying by increasing powers of $\eta(4z)^{24j}$, thereby increasing the corresponding entries of $r_\delta$ and $s_\delta$, we consider the claim as a congruence of forms in level $N=4m$ and find that, in order to remove all the poles, it is sufficient to multiply on both sides by $\eta(4z)^{360}$, which has weight 180, level 4, and trivial character.  

Now we can apply the Sturm bound of Theorem \ref{sturmthe}. The previous modular forms were both of weight zero and level dividing 44, so the resulting claim is a congruence of holomorphic modular forms of weight 180 and level 44 with trivial character, for which the Sturm bound is 1080.  One can quickly check, e.g. in  Mathematica, that the coefficients on both sides up to $q^{1080}$ are indeed congruent modulo 2. Therefore, the bound is satisfied and (\ref{116}) is proved.

The idea to show the case $(7,1,3)$ is similar and we will only sketch the argument here. Notice that now it suffices to choose $m=7$, $M=1$, $N=14$, $t=1$, and $(r_1)=(-3)$. Again, selecting $\nu=24$ meets the requirements set forth, and we easily have that $P_{m,r}(t)=\{1\}$. 

Picking $s=(10,10,5,-22)$, we see  that we want to multiply the desired congruence for $(7,1,3)$  by $$q^{-\frac{21}{24}}\frac{\eta(z)^{10}\eta(2z)^{10}\eta(7z)^5}{\eta(14z)^{22}}.$$

This yields
$$q^{\frac{3}{24}}\frac{\eta(z)^{10}\eta(2z)^{10}\eta(7z)^5}{\eta(14z)^{22}}\sum_{n=0}^\infty p_3(7n+1)q^n \equiv 
\frac{\eta(z)\eta(4z)^2\eta(14z)^{18}}{\eta(7z)^3\eta(28z)^{18}}+\frac{\eta(z)^{10}\eta(2z)^{10}\eta(7z)^2}{\eta(14z)^{22}}.$$

(It is convenient to run a brief linear computation to find a suitable form for the first term.)  Both sides are of weight zero and level dividing 28.  The order at infinity is $-10$ on the left and $-11$ on the right.  The first term on the right-hand side is of nontrivial character, so we must employ the latter bound in Theorem \ref{sturmthe}.

We then check orders at the possible cusps  $\frac{c}{d}$ with the theorems of Radu and Ligozat, and find that multiplying by $\eta(4z)^{264}$ clears all the poles.  A computation in Mathematica now quickly confirms that the right- and the left-hand side are congruent modulo $2$ up to the Sturm bound of 6336, and thus the proof of $(7,1,3)$ is complete. 

For the remaining cases of Theorem \ref{TwoTermRels}, namely $(17,5,1)$, $(19,4,1),$ and $(23,1,1)$, one can use (among other possible choices) the  $s$-vectors $(16,2,17,-34)$, $(18,2,19,-38)$, and $(22,2,23,-46)$, respectively, and then the poles may be cleared with $\eta(4z)^{3(m^2-1)}$ (here $m=17, 19$, or 23). (Note that here all modular forms involved are of trivial character.)
\end{proof} 

We mentioned earlier that all the cases of Theorems \ref{TwoTermRels} and \ref{ThreeTermRels} could in fact be proven by this method.  We provide below a list of corresponding $s$-vectors and the necessary powers of $\eta(4z)^{24}$; for the cases of Theorem \ref{TwoTermRels}, we have $N=2m$, so the divisors for the $s$-vector are $1,2,m,2m$.  For the two cases of Theorem \ref{ThreeTermRels}, we have $N=27$ and $N=125$, and therefore the divisors are $1,m,m^2,m^3$.  Given these facts, it is then a matter of verifying that all conditions are satisfied, making any necessary modulo $2$ reductions to suit the conditions of Theorem \ref{ghn}, finding the sufficient power of $\eta(4z)^{24}$ to clear the poles, determining the characters of all forms on the right-hand side, calculating the Sturm bound for the resulting forms, and finally checking congruence modulo 2 of the coefficients up to that bound to confirm the claims.\\

\begin{center}
\begin{tabular}{|c|c|c|}
\hline $(a,b,t)$ & $s$-vector & $j$ in $\eta(4z)^{24j}$ \\
\hline Any $(m,b,1)$ in Theorem \ref{TwoTermRels} & $(m-1,2,m,-2m)$ & $(m^2-1)/8$ \\
\hline $(3,2,3)$ & $(6,6,9,-18)$ & 3 \\
\hline $(5,2,3)$ & $(10,8,1,-16)$ & 6 \\
\hline $(7,1,3)$ & $(10,10,5,-22)$ & 11 \\
\hline $(5,0,5)$ & $(5,1,4,-5)$ & 4 \\
\hline $(3,0,9)$ & $(9,3,6,-9)$ & 3 \\
\hline $(3,8,3)$  in Theorem \ref{ThreeTermRels} & $(3,1,8,-9)$ & 28 \\
\hline $(5,24,1)$ in Theorem \ref{ThreeTermRels} & $(5,4,2,-10)$ & 200 \\
\hline 
\end{tabular}
\end{center}

\section{Acknowledgements} We are very grateful to an anonymous reviewer for kindly pointing out an omission in our original use of Sturm's theorem. We thank Mike Hirschhorn, Jia-Wei Kuo, and Winnie Li for helpful comments. This work will be part of the first author's Ph.D. dissertation, written at Michigan Tech under the supervision of the third author. This paper was written while the third author was partially supported by a Simons Foundation grant (\#274577).

\end{document}